\documentclass[12pt]{amsart}
\usepackage{amssymb,amsmath,amsthm,amsfonts,mathrsfs}
\usepackage[hmargin=3cm,vmargin=3.5cm]{geometry}
\usepackage{hyperref}
\usepackage{color}
\usepackage{graphicx,psfrag}
\usepackage{amscd}  
\usepackage[shellescape]{gmp}
\usepackage{stmaryrd}  
\usepackage[all,2cell]{xy} \UseAllTwocells \SilentMatrices
\usepackage{tikz-cd}
\usepackage[dvips]{epsfig}
\usepackage{MnSymbol}
\usepackage{enumitem}
\usepackage{comment}


\newtheorem{theorem}{Theorem}[section]
\newtheorem{lemma}[theorem]{Lemma}

\newtheorem{definition}[theorem]{Definition}

\newtheorem{cor}[theorem]{Corollary}

\theoremstyle{remark}
\newtheorem{remark}[theorem]{Remark}

\usepackage{todonotes}

\let\oldtocsection=\tocsection
\let\oldtocsubsection=\tocsubsection
\renewcommand{\tocsection}[2]{\hspace{0em}\oldtocsection{#1}{#2}}
\renewcommand{\tocsubsection}[2]{\hspace{1em}\oldtocsubsection{#1}{#2}}

\usepackage{chngcntr}

\title[On the universal pairing for \texorpdfstring{$2$}{2}-complexes]{On the universal pairing for \texorpdfstring{$2$}{2}-complexes}

\author{Mikhail Khovanov}
 \address{Department of Mathematics, Columbia University, New York, NY 10027, USA \newline  
  \phantom{} \hspace{0.06in} Department of Mathematics, Johns Hopkins University, Baltimore, MD,  21218, USA}
 \email{\href{mailto:khovanov@math.columbia.edu}{khovanov@math.columbia.edu}}

\author{Vyacheslav Krushkal}
\address{Department of Mathematics, University of Virginia, Charlottesville, VA 22904}
\email{\href{mailto:krushkal@virginia.edu}{krushkal@virginia.edu}}

\author{John Nicholson}
\address{School of Mathematics and Statistics, University of Glasgow, United Kingdom}
\email{\href{mailto:john.nicholson@glasgow.ac.uk}{john.nicholson@glasgow.ac.uk}}

\date{February 12, 2024}


\begin{document}

\def\R{\mathbb R}
\def\Q{\mathbb Q}
\def\Z{\mathbb Z}
\def\N{\mathbb N} 
\def\C{\mathbb C}
\def\S{\mathbb S}
\def\CP{\mathbb P}
\renewcommand\SS{\ensuremath{\mathbb{S}}}
\newcommand{\ophana}{\overline{\phantom{a}}}
\newcommand{\one}{\mathbf{1}} 
\newcommand{\mcK}{\mathcal{K}}
\newcommand{\mcU}{\mathcal{U}}
\newcommand{\mcA}{{\mathcal A}}
\newcommand{\mcF}{\mathcal{F}}

\def\l{\lbrace}
\def\r{\rbrace}
\def\o{\otimes}
\def\lra{\longrightarrow}
\def\Ext{\mathrm{Ext}}
\def\mc{\mathcal}
\def\mf{\mathfrak} 
\def\mcC{\mathcal{C}}
\def\mcI{\mathcal{I}}
\def\uFr{\underline{\mathrm{Fr}}}

\def\Bool{\mathbb{B}}
\def\ovb{\overline{b}}
\def\tr{{\sf tr}} 
\def\det{{\sf det }} 

\def\lra{\longrightarrow}
\def\kk{\mathbf{k}}  
\def\gdim{\mathrm{gdim}}  
\def\rk{\mathrm{rk}}
\def\undep{\underline{\epsilon}}

\def\CCC{\mathcal{C}} 
\def\kCCC{\kk\CCC}  
\def\udCCC{\underline{\dCCC}}  

\def\Cob{\mathrm{Cob}} 
\def\Cobtwo{\Cob_2}   
\def\Kob{\mathrm{Kob}}
\def\Kar{\mathrm{Kar}}   

\def\dmod{\mathrm{-mod}}   

\newcommand{\alphai}{\alpha^{\bullet}}

\newcommand{\oplusop}[1]{{\mathop{\oplus}\limits_{#1}}}
\newcommand{\ang}[1]{\langle #1 \rangle }

\newcommand{\undn}{\mathbf{n}}

\newcommand{\Hom}{\mbox{Hom}}
\newcommand{\id}{\mbox{id}}
\newcommand{\Id}{\mbox{Id}}
\newcommand{\End}{\mathrm{End}}


\usetikzlibrary{calc,decorations.pathmorphing,shapes}

\newcounter{sarrow}
\newcommand\xrsquigarrow[1]{
\stepcounter{sarrow}
\mathrel{\begin{tikzpicture}[baseline= {( $ (current bounding box.south) + (0,-0.5ex) $ )}]
\node[inner sep=.5ex] (\thesarrow) {$\scriptstyle #1$};
\path[draw,<-,decorate,
  decoration={zigzag,amplitude=0.7pt,segment length=1.2mm,pre=lineto,pre length=4pt}] 
    (\thesarrow.south east) -- (\thesarrow.south west);
\end{tikzpicture}}
}

\begin{abstract} The universal pairing for manifolds was defined and shown to lack positivity in dimension $4$ in \cite{FKNSWW}. We prove an analogous result for $2$-complexes, and show that the universal pairing does not detect the difference between simple homotopy equivalence and $3$-deformations. The question of whether these two equivalence relations are different for $2$-complexes is the subject of the Andrews-Curtis conjecture.
We also discuss the universal pairing for higher-dimensional complexes and show that it is not positive.
\end{abstract}

\maketitle

\setcounter{tocdepth}{1}
\tableofcontents

%
%

\section{Introduction}

The universal pairing for manifolds was introduced in \cite{FKNSWW}. For an oriented compact $(d-1)$-manifold $S$, ${\mathcal M}_S$ is the vector space of formal ${\mathbb C}$-linear combinations of oriented $d$-manifolds with boundary equal to $S$. 
The universal pairing $\langle\; , \; \rangle\colon {\mathcal M}_S\times {\mathcal M}_S\longrightarrow {\mathcal M}_{\emptyset}$, is defined by gluing manifolds along their common boundary (reversing the orientation and conjugating the coefficients of the second argument). The pairing is {\em positive} if $\langle x, x\rangle=0$ implies $x=0$ for all $x\in {\mathcal M}_S$. 

The pairing is positive in dimensions $d\leq 3$ \cite{CFW, FKNSWW}. 
In dimension $4$ the universal pairing is not positive. Given a cork (certain compact, smooth, contractible $4$-manifold with boundary) $M$, with an involution $\tau$ on the boundary, let $x=M-M'$. Here $M'$ is the same manifold, where the identification of the boundary with $S=\partial M$ is twisted by $\tau$. Then $\langle x, x\rangle=0$, and for some corks $x\neq 0\in {\mathcal M}_S$. The structure of corks arising in $h$-cobordisms of $4$-manifolds was used in \cite{FKNSWW} to show that unitary $(3+1)$-dimensional TQFTs cannot distinguish exotic smooth structures on simply-connected $4$-manifolds.  A follow-up work \cite{Re} showed, using different methods, that more generally, semi-simple TQFTs cannot distinguish exotic smooth structures on simply-connected $4$-manifolds. 
Positivity in higher dimensions, $d\geq 5$ was analyzed in \cite{KT}.

The Andrews-Curtis conjecture concerns the difference between $2$-complexes up to simple homotopy equivalence and $2$-complexes up to {\em $3$-deformations}, where elementary expansions and collapses are allowed with cells only up to dimension $3$. This is a formulation in terms of $2$-complexes of the group-theoretic conjecture which is often phrased using group presentations, cf. \cite{HM1} and references therein. 
Note that in dimensions $n\geq 3$, two $n$-complexes are simple homotopy equivalent if and only if they are related by an $(n+1)$-deformation \cite{Wa1}.

There is a well-known observation, cf. \cite{Quinn1}, about the analogy between the ``nilpotent'' stabilization property of exotic smooth structures on simply-connected $4$-manifolds and of the Andrews-Curtis conjecture. Given any two closed smooth simply-connected, homeomorphic $4$-manifolds $M_1, M_2$, manifolds $M_1\#^n S^2\times S^2$ and $M_2\#^n S^2\times S^2$ are diffeomorphic, for some~$n$. Similarly, given two simple-homotopy equivalent $2$-complexes $K_1, K_2$, complexes $K_1\vee^n S^2$ and $K_2\vee^n S^2$ are $3$-deformation equivalent,  for some $n$.
We extend this analogy to universal pairings, by establishing a result similar to that of \cite{FKNSWW} for pairings of 2-complexes.

\vspace{0.05in} 

The framework for the universal pairings of $2$-complexes is set up in Section \ref{sec: universal construction for 2-complexes}. We consider $2$-complexes up to $3$-deformations, that is, up to Andrews-Curtis equivalence.
The fact that the pairing is not positive is proved in Corollary \ref{cor: null vector not homotopic}.
Our proof relies on 
a general statement, established in Theorem \ref{thm: simple homotopy versus 3 deformations}, that 
given a pair $L_1, L_2$ of $2$-complexes with isomorphic fundamental groups and the same Euler characteristic, they are $3$-deformation equivalent to $2$-complexes $L'_1, L'_2$ whose difference is a null vector for the universal pairing.

We note that the universal pairing in the lower dimensional case, for graphs, is positive. This was proved in \cite[Theorem 6.6]{CFW} using graph tensor TQFTs. However, there are versions incorporating additional decorations and symmetries of graphs for which the pairing is not positive \cite{Cl}.

To focus on simple homotopy equivalence, in Section
\ref{sec: simple homotopy equivalent} we define a version of the universal pairing using an equivalence relation that is finer than $3$-deformation. Theorem \ref{thm: simple homotopy versus 3 deformations+} shows that this version of the universal pairing also cannot detect the potential difference between simple homotopy equivalence and $3$-deformations (or in other words, a potential counterexample to the Andrews-Curtis conjecture). The proof relies on 
a result of Quinn \cite{Quinn} that simple homotopy equivalent $2$-complexes are related by $3$-deformations and an {\em $s$-move}.
Thus, the $s$-move plays a role for $2$-complexes which is analogous to the role of corks for smooth $4$-manifolds in the proof in \cite{FKNSWW}.

Finally, in Section \ref{sec: higher dimensions} we consider the setting of higher dimensional complexes and show that the universal pairing lacks positivity in this case as well.

\subsection*{Acknowledgments}
We would like to thank Mike Freedman for discussions on the universal pairing of manifolds and complexes.

M.K. was partially supported by NSF grant DMS-2204033 and Simons Collaboration Award 994328 while working on this paper. V.K. was supported in part by NSF Grant DMS-2105467.

%
%


\section{2-complexes} \label{sec: background}

All manifolds considered in this paper are compact and all CW complexes are assumed to be finite.

We start with a brief discussion of group presentations and of the corresponding equivalence relations on $2$-complexes, referring the reader to \cite{HM1} for more details. 

Let $F_{g_1, \ldots, g_m} $ denote the free group generated by $g_1,\ldots, g_m$. Given a group presentation $$G\cong \langle g_1\ldots, g_m|R_1, \ldots, R_n\rangle, $$ consider the following transformations: 
\begin{enumerate}[label=(\roman*), itemsep=1mm]
    \item $R_j\mapsto w R_j w^{-1}, \, R_j\mapsto R_j^{-1}, \, R_j\mapsto R_j R_k$ or $ R_j\mapsto R_k R_j$ where  $w\in F_{g_1, \ldots, g_m}$, $j\neq k$. 
    \item Nielsen transformations on $F_{g_1, \ldots, g_m} $: $g_i\mapsto g_i^{-1}, \, g_i\mapsto g_i g_j$, $ g_i\mapsto g_j g_i$, $i\neq j$.
    \item $\langle g_1\ldots, g_m|R_1, \ldots, R_n\rangle \mapsto \langle g_1\ldots, g_m, g_{m+1}|R_1, \ldots, R_n, g_{m+1}\rangle$ and its inverse,
    \item $\langle g_1\ldots, g_m|R_1, \ldots, R_n\rangle \mapsto \langle g_1\ldots, g_m|R_1, \ldots, R_n, 1\rangle$ and its inverse.
\end{enumerate}

The entire collection of transformations (i)-(iv) is equivalent to Tietze moves.
Therefore two presentations give isomorphic groups if and only if they are related by transformations (i)-(iv). 

Consider the standard $2$-complex associated with a presentation as above, with a single $0$-cell, $m$ $1$-cells corresponding to the generators $g_i$ and $n$ $2$-cells corresponding to the relations $R_j$. The equivalence classes of group presentations with respect to transformations (i)-(iii) are in bijective correspondence with {\em $3$-deformation} types of $2$-complexes, cf. \cite[Theorem 2.4]{HM1}, where $3$-deformations are compositions of elementary expansions and collapses involving cells of dimension at most $3$. (Some authors, for example \cite{Quinn}, refer to this equivalence relation as a $2$-deformation. We follow the convention in \cite{HM1}.) It follows from the Tietze theorem that two $2$-complexes $K, L$ have isomorphic fundamental groups if and only if $K\vee^k S^2$ is $3$-deformation equivalent to $L\vee^{\ell} S^2$ for some $k, \ell$. It is worth noting that the transformations (ii), (iii) correspond to 2-deformations.

Thus, there are several equivalence relations on compact $2$-complexes $K, L$ that are of interest:
\begin{enumerate}
    \item  Stable equivalence: $K \vee^n \SS^2$ is homotopy equivalent to $L \vee^n \SS^2$ for some $n$. By the discussion above, this is equivalent to $K \vee^n \SS^2$ having the same $3$-deformation type as $L \vee^n \SS^2$ for some $n$, and also equivalent to $\pi_1(K)\cong \pi_1(L)$ and $\chi(K)=\chi(L)$. 
    \item  Homotopy equivalence.
    \item Simple homotopy equivalence.
    \item $3$-deformation: $K, L$ are related by a sequence of elementary expansions and collapses where the maximal dimension of cells is $3$.
    \item Combinatorial isomorphism.
\end{enumerate}

\begin{remark} \label{rem: equivalence}
    \hfill
\begin{enumerate}[label=(\roman*)] \label{rem: 2-complexes}
    \item A more general form of stable equivalence: $K \vee^k \SS^2$ is homotopy equivalent to $L \vee^{\ell} \SS^2$ for some $k, \ell$ is equivalent to $\pi_1(K)\cong \pi_1(L)$. However it is natural to impose $\chi(K)=\chi(L)$ as well  
    so that $K, L$ are not trivially distinguished by the Euler characteristic.
  \item For simply-connected complexes, $(1) \Leftrightarrow(2) \Leftrightarrow (3)$. Here the second equivalence holds because the Whitehead group of the trivial group is trivial~\cite{Co,Tu}. 
    \item (3) allows elementary expansions and collapses using cells of any dimension (in fact, up to dimension 4 suffices). The elementary steps in a simple homotopy equivalence may be reordered to be ``self-indexed'', so that first all elementary expansions take place in the order of increasing dimensions, and then the elementary collapses follow in the order of decreasing dimensions, cf. \cite[(14)]{HM1}.
    \item 
    Any two simple homotopy equivalent complexes are related by a 3-deformation followed by Quinn $s$-moves~\cite{Quinn} followed by a 3-deformation, see also~\cite{Bo} and Section~\ref{sec:shequivalence_versus_3deformations} below. 
    \item Instead of general $2$-complexes, one can study special polyhedra~\cite{Ma}. 
\end{enumerate}
\end{remark}

These equivalence relations are related by $(5) \Rightarrow (4) \Rightarrow (3) \Rightarrow (2) \Rightarrow (1)$. 
We will now discuss the extent to which these arrows can be reversed.

(1) vs. (2). Examples of $2$-complexes which are stably homotopy equivalent but not homotopy equivalent are known to exist only for a limited range of fundamental groups $G$.
The following gives a summary of the known examples.
Note that the first examples were found independently by Dunwoody \cite{Dunwoody} and Metzler \cite{Metzler} in 1976.
\begin{itemize}
\item 
For finite groups, examples are known for certain abelian groups by Metzler \cite{Metzler} and $Q_{28}$ the quaternion group of order 28 by Mannan-Popiel \cite{MP}.
\item 
Lustig \cite{Lustig} constructed infinitely many 2-complexes with the same $\pi_1,\chi$ which are pairwise homotopically distinct, over $\pi_1=\langle r,s,t \mid s^2=t^3, [r^2,s], [r^2,t] \rangle$.
\item 
For the trefoil group $T=\langle x, y \mid x^2=y^3 \rangle$, examples were constructed by Dunwoody \cite{Dunwoody}, and this is extended to infinitely many examples by Harlander-Jenson \cite{HJ1} which are pairwise homotopically distinct. This was used by Nicholson \cite{N21} to construct examples with arbitrary deficiency below the maximal value over free products $T \ast \cdots \ast T$.
\item 
For the Klein bottle group $K = \langle x, y \mid y^{-1}xyx \rangle$, examples were constructed by Mannan \cite{M}.
\end{itemize}

(2) vs. (3).
Examples of $2$-complexes which are homotopy equivalent but not simple homotopy equivalent proved difficult to find. The question of whether such examples existed appeared in C. T. C. Wall's 1979 Problem's List \cite[Problem D6]{Wa}, and it took until the 1990s for examples to be found by W. Metzler and M. Lustig \cite{Lustig0, Metzler1}. 
It is worth noting that it also took until recently for homotopy equivalent but not simple homotopy equivalent closed topological 4-manifolds  
to be found \cite{NNP}, and examples are not currently known for closed smooth 4-manifolds.
One reason that finding such examples is difficult is that, in order to show that a pair of homotopy equivalent CW-complexes $X$ and $Y$ are not simple homotopy equivalent, it does not suffice to show that a given homotopy equivalence $f: X \to Y$ is not simple. One needs to show that all such $f$ are not simple, and this can be done by considering the homotopy automorphisms of $X$ (or equivalently $Y$) which can be hard to compute (see \cite[p2]{NNP}).

(3) vs. (4).
The Andrews-Curtis conjecture claims that any such simple homotopy equivalence may be implemented with expansions and collapses of cells of dimensions at most $3$, i.e. a $3$-deformation, when $\pi_1$ is trivial. Thus, the conjecture asks whether $(3) \Leftrightarrow (4)$. It is often considered in the case of trivial $\pi_1$. Considerable effort has gone into various approaches to the Andrews-Curtis conjecture. We refer the reader to \cite{BD,Quinnlect} and references therein for approaches to the conjecture using ideas from Topological Quantum Field Theory.

\section{The universal pairing for \texorpdfstring{$2$}{2}-complexes} \label{sec: universal construction for 2-complexes}
One may consider the universal pairing and the positivity problem for $2$-complexes up to each equivalence relation. 
To focus on topological aspects,
and on the relation with the Andrews-Curtis conjecture, we will consider the universal pairing for $2$-complexes up to $3$-deformations.

\begin{definition} \label{def: kn} {\rm 
For each $n\in\Z_+$ consider $\mcK_n$, the set of  equivalence classes of $2$-complexes with a subgraph in its $1$-skeleton identified with $\vee^n\SS^1$. Here we consider complexes {\em up to $3$-deformations, restricting to the identity on $\vee^n\SS^1$.}
The circle wedge summands are ordered. $\mcK_n$ is a commutative monoid, where multiplication identifies $\vee^n\SS^1$ in the two factors. Given $K\in\mcK_n$, we will refer to the specified subgraph $\vee^n\SS^1$ of its $1$-skeleton as the {\em boundary} of $K$.}
\end{definition}

Thus, $\mcK_n$ comes with a commutative associative multiplication 
\begin{equation}\label{eq_mult_K}\cdot \ : \ \mcK_n\times \mcK_n\lra \mcK_n
\end{equation}
given by taking the union of two 2-complexes along the common boundary $\vee^n\SS^1$. The 2-complex $\vee^n\SS^1$ without 2-cells is the unit element for  multiplication. There is also the forgetful map $\mcK_n \lra \mcK_0$
and the composition 
\begin{equation}
    \langle \, , \rangle \ : \ \mcK_n\times \mcK_n \stackrel{\cdot}{\lra} \mcK_n \lra \mcK_0. 
\end{equation}
Note that the group $\mathsf{Aut}(F_n)$ of automorphisms of the free group  acts on  $\mcK_n$.

Fix a commutative ring $\kk$, and  consider free $\kk$-module $\kk \mcK_n$ with the basis $\mcK_n$.
Linearizing the map \eqref{eq_mult_K} turns 
$\kk\mcK_n$ into a commutative associative $\kk$-algebra, with the multiplication 
\begin{equation} \label{eq: commutative multiplication on Kn} \cdot\,\; \colon \; \kk \mcK_n\times \kk \mcK_n\longrightarrow \kk \mcK_n,
\end{equation}
and the pairing
\begin{equation} \label{eq: universal pairing on Kn}\langle\, , \, \rangle\,\; \colon \; \kk \mcK_n\times \kk \mcK_n\longrightarrow \kk \mcK_0
\end{equation}
taking values in commutative $\kk$-algebra $\kk\mcK_0$. The product in the latter is given by the disjoint union of complexes and extended by $\kk$-linearity. 

The pairing \eqref{eq: universal pairing on Kn} is the composition of the product \eqref{eq: commutative multiplication on Kn} with the evaluation $\kk \mcK_n\longrightarrow \kk \mcK_0$ (forgetting the boundary graph), so we can write: 
\begin{equation}\label{eq_dot_bracket}
  x, y \ \longmapsto x\cdot y
  \xrightarrow[]{\mathrm{forget\ boundary}}
  \langle x,y\rangle . 
\end{equation}

The map \eqref{eq: universal pairing on Kn}
is similar to the universal pairing for manifolds.
Note that the product \eqref{eq: commutative multiplication on Kn} is unavailable for manifolds; it is a feature present in the setting of CW or simplicial complexes.

\begin{remark} {\rm Associative commutative product \eqref{eq_mult_K} of 2-complexes (union, with the common boundary identified), explored in the present paper, results in a rigid structure upon linearization. This sort of multiplication is only possible with singular structures, such as simplicial or CW-complexes and graphs. It is not present in the categories of cobordisms between manifolds, with few exceptions. Coupled to reflection positivity~\cite{FLS}, this multiplication, in the case of graphs up to isomorphism rather than 2-complexes, leads to the state spaces associated to boundaries being commutative semisimple algebras and to a classification of suitable graph evaluations via homomorphisms into weighted graphs~\cite{FLS}. }   
\end{remark} 

\begin{remark}{\rm A variation on the universal pairing due to Freedman et al. is known as the universal construction of topological theories~\cite{BHMV,Kh2}, which has found uses in link homology~\cite{Kh1,RW} and in studies of topological theories~\cite{KKO}, including those with defects~\cite{IK}.}  
\end{remark}

\section{The universal pairing, stable homotopy equivalence and \texorpdfstring{$3$}{3}-deformations} \label{sec:shequivalence_versus_3deformations}
The following theorem may be viewed as an analogue for $2$-complexes of \cite[Theorem 4.1]{FKNSWW}.
In this theorem we use the convention that a $2$-complex $K$ is considered an element of $\mcK_n$ where $n$ is the first Betti number of the $1$-skeleton of $K$. That is, in the context of Definition \ref{def: kn}, the entire $1$-skeleton $K^1$ of $K$ is identified with $\vee^n\SS^1$.

Recall the difference between the dot product $\cdot$ and the pairing $\langle\, , \, \rangle$, see~\eqref{eq_dot_bracket}.

\begin{theorem}\label{thm: simple homotopy versus 3 deformations} 
Let $L_1, L_2$ be $2$-complexes such that $\pi_1(L_1) \cong \pi_1(L_2)$ and $\chi(L_1)=\chi(L_2)$. Then there exist $n$ and $L_1',L_2' \in \mcK_n$ such that 
\begin{enumerate}[label= \normalfont (\roman*)]
\item 
For $i =1, 2$, $L_i$ is $2$-deformation equivalent to $L_i'$.
\item 
$x:=L_1'-L_2'\in \kk \mcK_n$ satisfies $x\cdot x=0$, so in particular $x$ is a null vector for the universal pairing: $\langle x, x\rangle=0$.
\end{enumerate}
\end{theorem}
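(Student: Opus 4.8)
The plan is to mimic the cork construction of \cite{FKNSWW} in the combinatorial setting, using the stable equivalence statement recorded in Section~\ref{sec: background}: since $\pi_1(L_1)\cong\pi_1(L_2)$ and $\chi(L_1)=\chi(L_2)$, the complexes $L_1\vee^k\SS^2$ and $L_2\vee^k\SS^2$ are $3$-deformation equivalent for some $k$ (this is exactly the Tietze/Nielsen stabilization discussed after the list of transformations (i)--(iv)). First I would replace $L_i$ by $L_i\vee^k\SS^2$, which is a $2$-deformation (wedging on a $2$-sphere is an expansion followed by a collapse using cells up to dimension $3$), and rename these common-stabilization complexes so that, after a $3$-deformation, $L_1$ and $L_2$ become \emph{the same abstract $2$-complex} $K$, but equipped with two different $3$-deformation histories/markings. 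Passing to the $1$-skeleton, set $n=b_1(K^1)$ and regard $K\in\mcK_n$ by identifying $K^1$ with $\vee^n\SS^1$.

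Next I would build the two elements $L_1',L_2'\in\mcK_n$ as two copies of $K$ that agree on the boundary $\vee^n\SS^1=K^1$ but are glued to it by different identifications — the role played by the cork involution $\tau$ in \cite{FKNSWW}. Concretely, the $3$-deformation taking $L_1$ to $L_2$ induces, on the level of $1$-skeleta, an element of $\mathsf{Aut}(F_n)$ realized by Nielsen transformations; call it $\varphi$. Let $L_1'$ be $K$ glued to $\vee^n\SS^1$ by the identity and $L_2'$ be $K$ glued to $\vee^n\SS^1$ by $\varphi$ (equivalently, $\varphi\cdot L_1'$ in the $\mathsf{Aut}(F_n)$-action on $\mcK_n$). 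Since $\varphi$ is realized by an ambient $3$-deformation of the pair, $L_i$ is $2$-deformation equivalent to $L_i'$ after forgetting the boundary marking, giving~(i). The point of the marking is that gluing is now \emph{order-sensitive}: in $\mcK_n$, the product $L_1'\cdot L_2'$ glues two copies of $K$ along $\vee^n\SS^1$, one by $\id$ and one by $\varphi$, so it is the "double" $D_\varphi$ of $K$ twisted by $\varphi$, whereas $L_1'\cdot L_1'$ and $L_2'\cdot L_2'$ are the untwisted doubles, which are isomorphic to each other via $\varphi$ (applied to one side) — hence equal in $\mcK_n$. The key identity is then that $D_\varphi\cong D_{\id}$ in $\mcK_n$: the twisting automorphism $\varphi$ can be "absorbed" by a $3$-deformation of the glued-up complex rel $\vee^n\SS^1$, precisely because $\varphi$ was induced by a $3$-deformation of $K$ itself, so one can slide the twist across the copy of $K$ on one side. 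Granting this, all four products $L_1'\cdot L_1'$, $L_1'\cdot L_2'$, $L_2'\cdot L_1'$, $L_2'\cdot L_2'$ coincide in $\mcK_n$, whence
\[
x\cdot x=(L_1'-L_2')\cdot(L_1'-L_2')=L_1'\cdot L_1'-2\,L_1'\cdot L_2'+L_2'\cdot L_2'=0,
\]
and forgetting the boundary gives $\langle x,x\rangle=0$, which is~(ii).

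The main obstacle, and the step deserving the most care, is the gluing/absorption identity $L_1'\cdot L_2'=L_1'\cdot L_1'$ in $\mcK_n$. One must check that the $3$-deformation of $K$ realizing $\varphi$ can be performed \emph{rel the complementary copy of $K$} after gluing — i.e.\ that elementary expansions and collapses of cells of dimension $\le 3$ inside one copy of $K$ remain valid elementary moves in the union along $\vee^n\SS^1$, and that the cumulative effect is exactly to change the gluing map from $\varphi$ to $\id$ (or to conjugate it to something that can be collapsed). This is where the restriction to $3$-deformations, rather than mere homotopy equivalence, is essential: we need the equivalence $L_1\sim L_2$ to be witnessed by moves that survive the gluing, and we need $\chi(L_1)=\chi(L_2)$ so that no spurious cells are introduced that would obstruct the collapse. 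The rest — that wedging on $\SS^2$ is a $2$-deformation, that $\varphi$ is realized by Nielsen moves on $F_n$, and the binomial expansion above — is routine.
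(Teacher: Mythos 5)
Your first step already fails: wedging on a $2$-sphere is \emph{not} a $2$-deformation (nor a $3$-deformation of any kind). Attaching a $2$-cell along a nullhomotopic curve changes $H_2$ and the Euler characteristic, so $L_i\vee^k\SS^2$ is not even homotopy equivalent to $L_i$; expansions and collapses always preserve simple homotopy type. Consequently your $L_i'$ cannot satisfy conclusion (i), and (i) is exactly what gives the theorem its content: it is what lets one feed in pairs $L_1,L_2$ that are \emph{not} $3$-deformation equivalent (the Lustig examples, or a hypothetical Andrews--Curtis counterexample) and conclude in Corollary \ref{cor: null vector not homotopic} that $x\neq 0$. After your stabilization, $L_1'$ and $L_2'$ are a single complex $K$ with two boundary markings, which is a genuinely different (and weaker) statement. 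A second gap: even within your framework, the ``absorption'' identity $D_\varphi=D_{\id}$ in $\mcK_n$ --- which you correctly flag as the crux --- is not proved, and its setup is not well-posed: a $3$-deformation from $L_1\vee^k\SS^2$ to $L_2\vee^k\SS^2$ does not fix the $1$-skeleton and does not induce a canonical $\varphi\in\mathsf{Aut}(F_n)$; the moves change generators and simultaneously conjugate and multiply relators, so the relation between the two markings of $K$ is not just a twist by an automorphism of $\vee^n\SS^1$.

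For comparison, the paper never stabilizes $L_1,L_2$ themselves. It first applies Tietze moves (iii) followed by (ii) to put the two presentations on a \emph{common generating set}; this is a genuine $2$-deformation of each $L_i$ and yields $L_1',L_2'\in\mcK_n$ with a boundary-fixing map inducing an isomorphism on $\pi_1$ (Lemma \ref{lem: can fix boundary}). The key computation is then Lemma \ref{lem: L_1.L_2}: in the glued complex $L_1'\cdot L_2'$, each relator of the second factor lies in the normal closure of the relators of the first, so it can be trivialized by Tietze moves of type (i), i.e.\ by $3$-deformations fixing the $1$-skeleton, giving $L_1'\cdot L_2'=L_1'\vee^m\SS^2$ (and symmetrically $=L_2'\vee^l\SS^2$). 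The hypothesis $\chi(L_1)=\chi(L_2)$ enters only to ensure $l=m$, so that all four products $L_i'\cdot L_j'$ agree in $\mcK_n$ and $x\cdot x=0$. In other words, the ``sphere creation'' happens inside the doubled complex, where it is legitimate, rather than in $L_1,L_2$ themselves, where it would destroy conclusion (i).
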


It follows that a unitary TQFT cannot distinguish between stable equivalence
and $3$-deformation of $2$-complexes. Indeed, in a unitary TQFT, with $\kk=\C$, the null-vector property $\langle x, x\rangle=0$ implies $x=0$. This corollary, in the setting of semi-simple TQFTs over an algebraically closed field $\kk$,  can also be established using methods analogous to \cite{Re}.

\begin{proof}[Proof of Theorem \ref{thm: simple homotopy versus 3 deformations}]
We begin by making the following elementary observation, which applies even in the case where $\chi(L_1) \ne \chi(L_2)$.
A similar result is proven in \cite[Proposition 2.1]{Johnson1} but with the complexes involved taken up to homotopy equivalence rather than $3$-deformation equivalence. Our proof below follows the same argument with several modifications made throughout.
\begin{lemma} \label{lem: can fix boundary} 
Let $L_1, L_2$ be $2$-complexes with $\pi_1(L_1) \cong \pi_1(L_2)$. Then, for some $n$,  there exist $2$-complexes with fixed $\vee^n\SS^1$ boundary $L_1', L_2' \in \mcK_n$ such that
\begin{enumerate}[label= \normalfont (\roman*)]
\item 
For $i =1, 2$, $L_i$ is $2$-deformation equivalent to $L_i'$. 
\item 
There is a map $f\colon L_1'\to L_2'$ fixing their common boundary $\vee^n\SS^1= (L_1')^1= (L_2')^1$ and inducing an isomorphism on $\pi_1$. 
\end{enumerate}
\end{lemma}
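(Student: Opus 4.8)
The plan is to follow the strategy of Johnson \cite{Johnson1}, but keeping track of $3$-deformations (equivalently $2$-deformations, in the present terminology) rather than mere homotopy equivalences. First I would fix an isomorphism $\varphi\colon \pi_1(L_1)\to\pi_1(L_2)$. Presenting each $L_i$ as the standard $2$-complex of a group presentation, we obtain presentations $P_i=\langle g^{(i)}_1,\dots,g^{(i)}_{m_i}\mid R^{(i)}_1,\dots,R^{(i)}_{n_i}\rangle$ of $\pi_1(L_i)$. Since $P_1$ and $P_2$ present isomorphic groups, by the Tietze theorem they are related by a finite sequence of the moves (i)--(iv); the moves (i)--(iii) correspond exactly to $2$-deformations of the associated $2$-complexes by \cite[Theorem 2.4]{HM1}, while a move of type (iv) (adding a trivial relator) corresponds to wedging on an $\SS^2$. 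So after applying $2$-deformations to $L_1$ and to $L_2$ separately, and wedging on some number $k$ of $2$-spheres to each, we may assume both complexes arise from presentations with the \emph{same} generating set $g_1,\dots,g_m$ (same $m$, same labels), and moreover that the images of the generators of $P_1$ match up with those of $P_2$ under $\varphi$.

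Next I would arrange a common $1$-skeleton with a map between the complexes fixing it. The $1$-skeleton of the $2$-complex of a presentation on $m$ generators is $\vee^m\SS^1$, so set $n=m$ and regard both $L_1'$ and $L_2'$ as elements of $\mcK_n$ with boundary $\vee^n\SS^1=(L_1')^1=(L_2')^1$; this gives part (i). For part (ii), I would build $f\colon L_1'\to L_2'$ cell by cell: on the $1$-skeleton $f$ is the identity on $\vee^n\SS^1$, and for each $2$-cell of $L_1'$ attached along a loop $w$ in $\vee^n\SS^1$, the element $[w]\in\pi_1(\vee^n\SS^1)=F_n$ maps into $\pi_1(L_2')$ to the class $\varphi([w])=1$ (because $w=R^{(1)}_j$ is a relator of $P_1$, which $\varphi$ carries to a consequence of the relators of $P_2$, hence trivial in $\pi_1(L_2')$). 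Therefore each attaching loop of $L_1'$ bounds a map of a disc into $L_2'$, rel basepoint, and extending over every $2$-cell we obtain the desired $f$. That $f$ induces $\varphi$ on $\pi_1$ is immediate from its construction on the $1$-skeleton together with the fact that $\pi_1$ of a $2$-complex is the cokernel of $\pi_2$ of the attaching data, i.e.\ depends only on the $1$-skeleton and the homotopy classes of attaching maps; since $f$ restricts to the identity on $\vee^n\SS^1$, the induced map on $\pi_1$ is the one determined by the relators, which is exactly $\varphi$ by our choice of matching presentations.

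The main obstacle, and the place where care is needed beyond Johnson's argument, is the bookkeeping in the first paragraph: one must verify that every Tietze move used to pass between $P_1$ and $P_2$ can be realized so as to be compatible with a \emph{single} fixed isomorphism $\varphi$ and so that at the end the generator sets are literally identified, not merely abstractly isomorphic. Concretely, the type (ii) Nielsen moves change the generating set, so they must be applied to one side (say $L_1$) to bring its generators into the ``coordinates'' of $P_2$; each such move is a homeomorphism of $\vee^n\SS^1$ (a change of basis of $F_n$), so it does not fix the boundary pointwise, but one can absorb it by composing with the corresponding automorphism of $F_n$ before declaring the boundary identification — equivalently, one uses the $\mathsf{Aut}(F_n)$-action on $\mcK_n$ noted in Section \ref{sec: universal construction for 2-complexes}. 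The remaining subtlety is matching the number of $2$-spheres wedged to each side: type (iv) moves (add/remove a trivial relator) and the difference $n_1\neq n_2$ in the number of relators are handled by wedging on enough copies of $\SS^2$ to both $L_1'$ and $L_2'$ so that they have equally many $2$-cells; since wedging on $\SS^2$ is a $2$-deformation composed with adding a trivially-attached cell, it is permitted in (i), and the extra $\SS^2$ summands map to a point under $f$, causing no trouble in (ii). Once these compatibilities are set up, the construction of $f$ is routine as sketched above.
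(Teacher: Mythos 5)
There is a genuine gap in your first paragraph, and it is exactly at the point the lemma is designed to handle. You propose to bring the two presentations onto a common generating set by running a Tietze sequence of moves (i)--(iv) between $P_1$ and $P_2$, realizing the type (iv) moves by wedging on copies of $\SS^2$, and you assert that ``wedging on $\SS^2$ is a $2$-deformation composed with adding a trivially-attached cell, it is permitted in (i).'' It is not: wedging on $\SS^2$ changes the Euler characteristic (and the homotopy type), so $L_i\vee^k\SS^2$ is not $2$-deformation (nor $3$-deformation) equivalent to $L_i$, and conclusion (i) fails for the complexes you produce. This is not a removable technicality in your route, since two presentations of the same group generally cannot be connected by Tietze moves without type (iv) moves; and it matters for the application: the whole point of the lemma in this paper is to produce \emph{unstabilized} representatives $L_i'$, because after stabilization $L_1$ and $L_2$ become $3$-deformation equivalent and the resulting $x=L_1'-L_2'$ would be $0$ in $\kk\mcK_n$, emptying Corollary \ref{cor: null vector not homotopic}. (A smaller inaccuracy in the same paragraph: moves of type (i) correspond to $3$-deformations, not $2$-deformations, so even setting the spheres aside your sequence only gives $3$-deformation equivalence, contrary to the claimed $2$-deformation in (i).)

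The missing idea is that one should \emph{not} try to transform $P_1$ into $P_2$ at all. Instead, enlarge each presentation by adjoining the other's generators: for each generator $y_i$ of $P_2$, choose a word in $x_1,\dots,x_a$ representing the corresponding element of $G$, and pass from $P_1$ to $\langle x_1,\dots,x_a,x_{a+1}\mid R_1,\dots,R_b,\,x_{a+1}y_1^{-1}\rangle$ by one move of type (iii) followed by moves of type (ii); repeat for $y_2,\dots,y_c$, and do the symmetric enlargement of $P_2$ by $x_1,\dots,x_a$. These moves are genuine $2$-deformations, no type (i) or (iv) moves and no sphere stabilization occur, and both resulting complexes have $1$-skeleton $\vee^{a+c}\SS^1$ with the circles representing the same elements of $G$ (note that the numbers of $2$-cells need not match, and the lemma does not require them to). Your second paragraph --- extending the identity on the common $1$-skeleton over the $2$-cells because each attaching word maps to a relation of $G$, hence is nullhomotopic in $L_2'$, and checking that $\pi_1(f)$ is an isomorphism --- is correct and is how the paper finishes; it is only the reduction to a common boundary that needs to be done by the enlargement trick rather than by a Tietze sequence with stabilization.
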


\begin{proof}[Proof of Lemma \ref{lem: can fix boundary}] Let $G = \pi_1(L_1) \cong \pi_1(L_2)$. By the comments in Section \ref{sec: background}, we can alter $L_1$ and $L_2$ by $2$-deformations so that they each have a single $0$-cell and so correspond to group presentations for $G$. We will denote the presentations respectively by
$$\langle x_1,\ldots, x_a \, |\,  R_1, \ldots, R_b\rangle\; ,\;   \langle y_1,\ldots, y_c \, | \, S_1, \ldots, S_d\rangle. $$

Next view each $y_i \in F_{x_1,\ldots,x_a}$. By applying the Tietze move (iii) followed by a sequence of Tietze moves (ii), as defined in Section \ref{sec: background}, we obtain a 2-deformation from $L_1$ to 
$$\langle x_1,\ldots, x_a,x_{a+1}\, |\,  R_1, \ldots, R_b,x_{a+1}y_1^{-1}\rangle.$$
By repeating these operations on $L_1$ for $y_1,\ldots,y_c$, and applying the analogous operations on $L_2$ for $x_1,\ldots,x_a$, we get that $L_1$ and $L_2$ are 3-deformation equivalent to
\begin{align*}
    &\langle x_1, \ldots, x_{a+c} \, |\,  R_1, \ldots, R_b,x_{a+1}y_1^{-1}, \ldots, x_{a+c}y_c^{-1}\rangle, \\
    &\langle y_1,\ldots, y_{c+a} \, | \, S_1, \ldots, S_d,y_{c+1}x_1^{-1},\ldots,y_{c+a}x_a^{-1}\rangle
\end{align*}
respectively, and we will denote those 2-complexes by $L_1', L_2'$.

Let $n =a+c$. By construction, $\{x_1,\ldots,x_{a+c}\}$ and $\{y_1,\ldots,y_{c+a}\}$ are the same elements of $G$ up to permutations. Fix identifications of $L_1^1$ and $L_2^1$ with $\vee^n\SS^1$ such that each copy of $\SS^1$ corresponds to the same generator. Thus we have $L_1', L_2' \in \mcK_n$.
The identification $(L_1')^1 \to (L_2')^1$ is the identity on the defined boundaries $\vee^n\SS^1$ and, by inclusion, gives a map $g: (L_1')^1 \to L_2'$. If $\phi : S^1 \to (L_1')^1$ denotes the attaching map for a relator, then $g \circ \phi$ is nullhomotopic since the presentations both present $G$. Hence, by basic algebraic topology, $g$ extends to a map $f : L_1' \to L_2'$. 
This restricts to the identification $(L_1')^1 \to (L_2')^1$ and, since the generators are mapped to generators, $\pi_1(f)$ is an isomorphism. 
\end{proof}

The following allows us to compute $\cdot \ : \ \mcK_n\times \mcK_n\lra \mcK_n$ for the 2-complexes arising from Lemma \ref{lem: can fix boundary}. 
If $L \in \mcK_n$, then define $L \vee \SS^2 \in \mcK_n$ to be the 2-complex whose boundary is identified with $\vee^n \SS^1$ via the identification $L^1 \cong (L \vee \SS^2)^1$ induced by inclusion.

\begin{lemma} \label{lem: L_1.L_2}
Let $L_1, L_2\in \mcK_n$ be $2$-complexes such that there is a map $f\colon L_1 \to L_2$ fixing their common boundary $\vee^n\SS^1 = L_1^1= L_2^1$ and inducing an isomorphism on $\pi_1$. If $l,m$ denote the number of $2$-cells of $L_1, L_2$ respectively, then
$$L_1\cdot L_2 = L_1 \vee^m \SS^2 = L_2 \vee^l \SS^2 \in \mcK_n.$$
\end{lemma}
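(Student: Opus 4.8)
The plan is to analyze the $2$-complex $L_1 \cdot L_2$ directly from the gluing construction, and to identify it up to $3$-deformation (rel boundary) with $L_1$ together with some wedged-on $2$-spheres. Recall that $L_1 \cdot L_2$ is obtained by taking $L_1 \sqcup L_2$ and identifying the two copies of the boundary graph $\vee^n \SS^1$; since by hypothesis $L_1^1 = L_2^1 = \vee^n\SS^1$ is the \emph{entire} $1$-skeleton of each, the result is a $2$-complex with a single $1$-skeleton $\vee^n\SS^1$ to which are attached the $l$ $2$-cells of $L_1$ and the $m$ $2$-cells of $L_2$. In group-presentation language, if $L_1$ corresponds to $\langle g_1,\dots,g_n \mid R_1,\dots,R_l\rangle$ and $L_2$ to $\langle g_1,\dots,g_n \mid S_1,\dots,S_m\rangle$ (using the \emph{same} generators, as the boundaries are identified), then $L_1\cdot L_2$ corresponds to $\langle g_1,\dots,g_n \mid R_1,\dots,R_l,S_1,\dots,S_m\rangle$.

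The key point is that the map $f\colon L_1 \to L_2$ fixing the boundary tells us that each relator $S_j$ of $L_2$, read as an element of $\pi_1(\vee^n\SS^1) = F_n$, lies in the normal closure of $\{R_1,\dots,R_l\}$ — indeed $f$ being the identity on the $1$-skeleton and inducing an isomorphism on $\pi_1$ forces $S_j \in \langle\langle R_1,\dots,R_l\rangle\rangle$, i.e. $S_j$ is a consequence of the relators of $L_1$. I would then invoke the standard fact (a Tietze-move / $3$-deformation argument, cf.\ the transformations (i)--(iii) of Section~\ref{sec: background}) that adding to a presentation a relator that is already a consequence of the existing relators is realized by a sequence of $3$-deformations, at the cost of \emph{also} introducing a trivial relator ``$1$'' — equivalently, wedging on an $\SS^2$. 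Applying this to each of $S_1,\dots,S_m$ in turn, one $3$-deforms (rel $\vee^n\SS^1$) the presentation $\langle g_* \mid R_*, S_*\rangle$ to $\langle g_* \mid R_*, 1,\dots,1\rangle$ with $m$ trivial relators, which is exactly $L_1 \vee^m \SS^2 \in \mcK_n$. The symmetric argument, using that $f$ induces an isomorphism on $\pi_1$ so that conversely each $R_i$ is a consequence of $\{S_1,\dots,S_m\}$, gives $L_1\cdot L_2 \simeq L_2 \vee^l \SS^2$ rel boundary.

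I expect the main obstacle to be bookkeeping the ``rel boundary'' constraint carefully: the $3$-deformations furnished by transformations (i) involve conjugating and multiplying relators by elements and by each other, operations which do not touch the $1$-cells, so they are legitimately $3$-deformations restricting to the identity on $\vee^n\SS^1$ — but one should be explicit that \emph{no} Nielsen moves (transformation (ii)) on the generators are needed here, since those would alter the identification of the boundary. A second, more technical point is justifying that ``$S_j$ is a consequence of the $R_i$'' can be converted into an honest $3$-deformation rather than merely a homotopy: this is where one uses that an elementary expansion of a $3$-cell along a $2$-cell whose boundary word is a product of conjugates of the $R_i$'s realizes exactly the relator-insertion move (i), followed by a collapse; assembling these is routine but should be cited to \cite[Theorem 2.4]{HM1} or the analogous discussion of presentation moves versus $3$-deformations. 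Finally I would note that the count is consistent with Euler characteristic: $\chi(L_1 \cdot L_2) = \chi(L_1) + m = \chi(L_2) + l$ since $\chi(\vee^n\SS^1) = 1-n$ and $\chi$ is additive along the identified subcomplex, matching $\chi(L_1 \vee^m \SS^2)$ and $\chi(L_2 \vee^l \SS^2)$.
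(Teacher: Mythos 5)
Your proposal is correct and follows essentially the same route as the paper's proof: identify $L_1\cdot L_2$ with the presentation $\langle x_1,\dots,x_n \mid R_1,\dots,R_l,S_1,\dots,S_m\rangle$, use the boundary-fixing map $f$ inducing a $\pi_1$-isomorphism to see that each $S_j$ lies in the normal closure of the $R_i$, and then trivialize the $S_j$ one at a time by Tietze moves of type (i), which fix the $1$-skeleton, yielding $L_1\vee^m\SS^2$, with the other equality by the symmetric argument. The additional remarks you make (no Nielsen moves needed, realization of the relator moves as $3$-deformations rel $\vee^n\SS^1$, Euler characteristic check) are consistent with, and implicit in, the paper's argument.
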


\begin{proof}[Proof of Lemma \ref{lem: L_1.L_2}]
It suffices to prove that $L_1\cdot L_2 = L_1 \vee^m \SS^2 \in \mcK_n$.

Let $G = \pi_1(L_1) \cong \pi_1(L_2)$. By hypothesis, $L_1$ and $L_2$ correspond to presentations on the same generating set for $G$. We will denote the presentations respectively by
$$\langle x_1,\ldots, x_n \, |\,  R_1, \ldots, R_l\rangle\; ,\;   \langle x_1,\ldots, x_n \, | \, S_1, \ldots, S_m\rangle. $$
Then $L_1 \cdot L_2$ corresponds to
$$\langle x_1,\ldots, x_n \, |\,  R_1, \ldots, R_l, S_1, \ldots, S_m\rangle. $$

Note that $G \cong F_{x_1,\ldots,x_n}/N$ where $N = N(R_1,\ldots,R_l)$ denotes the normal closure of relators $R_1, \ldots, R_l$. Since each $S_i$ is trivial in $G$, we have that $S_i \in N$ and so 
\[ S_i = (g_1^{-1}r_1g_1) \ldots (g_t^{-1}r_t g_t) \in F_{x_1,\dots,x_n} \]
for some $g_i \in F_{x_1,\dots,x_n}$ and $r_i \in \{R_1^{\pm 1},\ldots,R_l^{\pm 1}\}$.

We now claim that, in the presentation for $L_1 \cdot L_2$, we can replace each $S_i$ with the trivial relator $1$ by a sequence of 3-deformations which fix the 1-skeleton. 
Indeed, the change $ S_i \mapsto (g_1^{-1}r_1g_1)^{-1}S_i$ is implemented by Tietze moves of type (i). Repeating this for successive $g_j^{-1}r_jg_j$ gives $S_i \mapsto 1$, showing
that the presentation for $L_1 \cdot L_2$ is equivalent, by 3-deformations fixing the 1-skeleton, to
$$\langle x_1,\ldots, x_n \, |\,  R_1, \ldots, R_l, 1, \ldots, 1\rangle $$
which corresponds to $L_1 \vee^m \SS^2$. 
\end{proof}

Now suppose that $L_1, L_2$ are $2$-complexes such that $\pi_1(L_1) \cong \pi_1(L_2)$ and $\chi(L_1)=\chi(L_2)$.
By Lemma \ref{lem: can fix boundary}, there exist $L_1',L_2' \in \mcK_n$ such that $L_i$ is 3-deformation equivalent to $L_i'$ for each $i=1,2$ and there is a map $f: L_1' \to L_2'$ fixing their common boundary and such that $\pi_1(f)$ is an isomorphism. Since $\chi(L_1)=\chi(L_2)$ (and so $\chi(L_1')=\chi(L_2')$), the number of 2-cells each of $L_1',L_2'$ has is $m=\chi(L_1)-1+n$.
By Lemma \ref{lem: L_1.L_2},
\[ L_1' \cdot L_1' = L_1' \vee^m \SS^2 = L_1' \cdot L_2' = L_2' \cdot L_1' = L_2' \vee^m \SS^2 = L_2' \cdot L_2' \in \mcK_n. \]

In particular, if $x = L_1'-L_2'$, then 
\[ x \cdot x = L_1' \cdot L_1' -L_1' \cdot L_2' - L_2' \cdot L_1' + L_2' \cdot L_2' = 0 \in \kk \mcK_n \]
as required.
This completes the proof of Theorem \ref{thm: simple homotopy versus 3 deformations}.
\end{proof}

\begin{remark} \label{rem: after proof}
\hfill
\begin{enumerate}[label=(\roman*)]
\item
Combining Lemmas \ref{lem: can fix boundary} and \ref{lem: L_1.L_2}
also implies that, if $L_1, L_2$ are $2$-complexes with $\pi_1(L_1) \cong \pi_1(L_2)$ and $l,m$ are their numbers of $2$-cells respectively, then $L_1 \vee^m \SS^2$ and $L_2 \vee^l \SS^2$ are $3$-deformation equivalent.
This result, which follows from the Tietze theorem, was mentioned in Section \ref{sec: background}. The argument is essentially the same as the classical one. 
\item
For our later applications, we apply Theorem \ref{thm: simple homotopy versus 3 deformations} to pairs of 2-complexes $L_1$ and $L_2$ with $\pi_1(L_1) \cong \pi_1(L_2)$ and $\chi(L_1)=\chi(L_2)$ but which are not 3-deformation equivalent. Such examples are described in Section \ref{sec: background}.  
If the examples $L_1$ and $L_2$ we use are not homotopy equivalent (as in the (1) vs. (2) examples of Section \ref{sec: background}), then a slightly weaker version of Theorem \ref{thm: simple homotopy versus 3 deformations} would suffice. Namely, the $3$-deformation equivalences in item (i) of the theorem need only be homotopy equivalences. In this case, we can use \cite[Proposition 2.1]{Johnson1} in place of Lemma \ref{lem: can fix boundary} and thus obtain a simpler proof.
\end{enumerate}
\end{remark}

\begin{cor} \label{cor: null vector not homotopic}
For each $n\geq 3$ there exist non-trivial elements $x\in \kk \mcK_n$ such that $x\cdot x=0$. 
\end{cor}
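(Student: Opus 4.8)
The plan is to deduce Corollary \ref{cor: null vector not homotopic} directly from Theorem \ref{thm: simple homotopy versus 3 deformations} by exhibiting, for each $n\ge 3$, a pair of $2$-complexes $L_1, L_2$ with $\pi_1(L_1)\cong\pi_1(L_2)$ and $\chi(L_1)=\chi(L_2)$ which are \emph{not} $3$-deformation equivalent (in fact not even homotopy equivalent). Applying the theorem to such a pair produces $n'$ and $L_1', L_2'\in\mcK_{n'}$ with $L_i$ $2$-deformation equivalent to $L_i'$ and $x:=L_1'-L_2'$ satisfying $x\cdot x=0$. The only thing left is to check that $x\ne 0\in\kk\mcK_{n'}$, i.e. that $L_1'$ and $L_2'$ are distinct elements of the monoid $\mcK_{n'}$; since $\mcK_{n'}$ is a basis of the free module $\kk\mcK_{n'}$, distinctness of the two basis elements gives $x\ne 0$.

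The key point for non-triviality is that $L_1'$ and $L_2'$ cannot be $3$-deformation equivalent rel boundary, hence represent different elements of $\mcK_{n'}$. This follows because $3$-deformation equivalence (even without the rel-boundary constraint) is a homotopy invariant, and $L_i'$ is $3$-deformation equivalent to $L_i$ by part (i) of the theorem; so if $L_1'\sim L_2'$ in $\mcK_{n'}$ then $L_1$ and $L_2$ would be $3$-deformation equivalent, hence homotopy equivalent, contradicting the choice of the pair. Therefore the first step is to produce, for every $n\ge 3$, a suitable pair $L_1, L_2$: one takes any of the classical $(1)$ vs. $(2)$ examples from Section \ref{sec: background} — e.g. the Metzler/Dunwoody complexes, or the trefoil-group examples of Dunwoody and Harlander--Jenson, or the Lustig examples — which are stably homotopy equivalent (hence have isomorphic $\pi_1$ and equal $\chi$) but not homotopy equivalent. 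To arrange that the resulting boundary graph has rank exactly $n$ for a prescribed $n\ge 3$, one can wedge on extra circles with a trivial (null-homotopic) $2$-cell attached along each, or equivalently apply Tietze moves of type (iii) to both presentations simultaneously: adding the same number of trivially-presented generators to both $L_1$ and $L_2$ changes neither $\pi_1$ nor the homotopy type relationship, increases $n$ by one each time, and increases $\chi$ of both by the same amount.

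Concretely, the proof will read: fix $n\ge 3$. Start from a Dunwoody-type pair over a $2$-generator presentation and, if necessary, apply Tietze move (iii) repeatedly to both complexes to raise the number of generators; this does not affect the fact that they are not homotopy equivalent (homotopy type is unchanged under (iii)) nor the equalities $\pi_1(L_1)\cong\pi_1(L_2)$, $\chi(L_1)=\chi(L_2)$. After possibly further applying Lemma \ref{lem: can fix boundary}'s construction inside Theorem \ref{thm: simple homotopy versus 3 deformations}, one lands in $\mcK_{n'}$ for some $n'\ge n\ge 3$; but in fact, since the statement only asks for existence of \emph{some} nontrivial null vector for each $n\ge 3$, it suffices to note that the whole construction can be carried out so that $n'$ can be taken to be any value $\ge 3$ — and one can always pad with trivial generators at the start to hit the exact target $n$. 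The output of Theorem \ref{thm: simple homotopy versus 3 deformations} then gives $x=L_1'-L_2'$ with $x\cdot x=0$, and the homotopy-invariance argument above gives $x\ne 0$.

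The main obstacle I anticipate is purely bookkeeping: making sure the index $n$ can be prescribed exactly rather than merely being "some $n$" produced by the machine of Lemma \ref{lem: can fix boundary}. This is handled by the stabilization remark — adding trivial generators via Tietze (iii) — but it requires checking that such stabilization preserves the non-homotopy-equivalence of the pair, which is immediate since (iii) realizes a \emph{simple homotopy equivalence} (indeed a $3$-deformation, as noted in Section \ref{sec: background}) and in particular a homotopy equivalence, so $L_1\vee(\text{trivial})\not\simeq L_2\vee(\text{trivial})$ whenever $L_1\not\simeq L_2$. A secondary, essentially trivial, point is the passage from "$L_1',L_2'$ not $3$-deformation equivalent rel boundary" to "$L_1'\ne L_2'$ in $\mcK_{n'}$", which is immediate from Definition \ref{def: kn}, and then from "$L_1'\ne L_2'$ in the monoid" to "$L_1'-L_2'\ne 0$ in the free module $\kk\mcK_{n'}$", which is immediate since $\mcK_{n'}$ is a $\kk$-basis.
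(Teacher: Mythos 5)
Your overall strategy is the paper's: feed a stably equivalent but homotopy inequivalent pair into Theorem \ref{thm: simple homotopy versus 3 deformations} and deduce $x\neq 0$ from the fact that equality of $L_1'$ and $L_2'$ in $\mcK_{n}$ would give a $3$-deformation, hence a homotopy equivalence, between the original complexes; that non-triviality argument is correct and is exactly how the paper argues. The genuine gap is your claim that ``the whole construction can be carried out so that $n'$ can be taken to be any value $\ge 3$.'' Theorem \ref{thm: simple homotopy versus 3 deformations} as stated gives no control over $n$: its proof via Lemma \ref{lem: can fix boundary} places both complexes on a common generating set of size $a+c$ (the sum of the generator numbers of the two presentations), and your only tool for adjusting the index --- padding by Tietze move (iii) --- can only \emph{increase} it. All the examples listed in Section \ref{sec: background} require at least two generators each (the Dunwoody/Harlander--Jensen, Metzler and Mannan-type pairs have $2$, Lustig's have $3$), so routing them through Lemma \ref{lem: can fix boundary} lands you in $\mcK_{n'}$ with $n'\ge 4$, and the case $n=3$ of the corollary is never reached. (Moreover, padding both complexes ``simultaneously'' changes $a+c$ by $2$ each time, so as written you would even miss a parity class; padding one side only fixes that, but still gives nothing below $4$.) The paper closes precisely this gap by \emph{not} invoking Lemma \ref{lem: can fix boundary} for $n=3$: Lustig's complexes $K_i$ already share the $1$-skeleton $\vee^3\SS^1$, and the identity on it extends to $\pi_1$-isomorphisms $K_i\to K_j$, so they sit directly in $\mcK_3$ and Lemma \ref{lem: L_1.L_2} applies as is; larger $n$ is then obtained by wedging on circles. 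Your proof needs this observation (or another family realizing the boundary-fixing map on a $3$-generator skeleton), not just padding.

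A secondary point of wording: ``wedge on extra circles with a trivial (null-homotopic) $2$-cell attached along each'' is not equivalent to Tietze move (iii). Attaching the $2$-cell along a null-homotopic loop wedges on $\SS^1\vee\SS^2$, which changes $\pi_1$ and, more seriously, performs exactly the $\SS^2$-stabilization under which your chosen pairs become equivalent, so it could destroy the homotopy inequivalence you need. The version you actually use in your concrete paragraph --- Tietze (iii), a cancelling generator/relator pair, which is a $2$-expansion and preserves homotopy type --- is the correct one (and for $n\ge 4$ it is, if anything, cleaner than the paper's wedging of bare circles, since homotopy inequivalence is visibly preserved); the ``null-homotopic'' alternative should be deleted.
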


\begin{proof}
To deduce this corollary from Theorem \ref{thm: simple homotopy versus 3 deformations},
it suffices for each $n\geq 3$ to exhibit 2-complexes $L_1$ and $L_2$ such that $\pi_1(L_1) \cong \pi_1(L_2)$ and $\chi(L_1) = \chi(L_2)$ which are not 3-deformation equivalent, and so that the corresponding $L'_1, L'_2$ are elements of $\kk \mcK_n$. In the hierarchy of equivalence relations of 2-complexes discussed in Section \ref{sec: background}, such examples should be equivalent by (1) but not (4). 

By the discussion in Section \ref{sec: background}, examples of $L_1$ and $L_2$ which are simply connected would require a counterexample to the Andrews-Curtis conjecture. Examples in the non-simply connected case are listed at the end of Section \ref{sec: background} and are of two types. They could be stably homotopy equivalent but not homotopy equivalent (i.e. (1) but not (2)), or homotopy equivalent but not simple homotopy equivalent (i.e. (2) but not (3)).

To be specific, consider $n=3$ and the examples in \cite{Lustig} showing that the standard $2$-complexes $K_i$ for the group presentations
$$ K_i:=\langle   r, s, t\, |\,  s^2=t^3, [r^2, s^{2i+1}]= [r^2, t^{3i+1}]=1
\rangle,$$
$i=1, 2,\ldots$, have isomorphic $\pi_1$ and the same Euler characteristic, but are pairwise homotopy inequivalent. For any $i, j$ the maps $K_i\to K_j$ inducing an isomorphism on $\pi_1$ are obtained by mapping $\vee^3 \SS^1$ by the identity and extending to the $2$-cells. (Lemma \ref{lem: can fix boundary} in the proof of Theorem \ref{thm: simple homotopy versus 3 deformations} was used to find maps fixing the boundary, and the preceding sentence indicates that for these examples, the maps already satisfy this property.)
Therefore the $2$-complexes $K_i$ are all distinct elements in $\kk {\mathcal K}_3$, where $\vee^3 \SS^1$ correspond to the generators $r,s,t$. 

This concludes the proof of the corollary for $n=3$; it follows for $n>3$ by taking a wedge sum of the $2$-complexes involved in the construction with $\vee^{n-3} \SS^1$.

\end{proof}

\begin{remark}
\hfill
\begin{enumerate}[label=(\roman*)]
\item 
Corollary \ref{cor: null vector not homotopic} constructs an infinite collection $\{ K_i \}_{i\ge 1}$  of elements in $\kk \mcK_n$. Let \[\kk \mcK_n' \ := \ \kk\mcK_n/\ker (\langle,\rangle)
\]
be the quotient of $\kk\mcK_n$ by the kernel of the bilinear form \eqref{eq: universal pairing on Kn}. It is an interesting question whether the elements $K_i-K_{i+1}$ are $\kk$-linearly independent in $\kk \mcK_n'$ over all $i\ge 1$.
\item 
We point out that for any $n$ there does not exist a non-zero $x\in \kk \mcK_n$ such that $x\cdot y=0\in\kk\mcK_n$ for all $y\in \kk \mcK_n$. This is due to the fact that $\kk \mcK_n$
    has a unit, $\vee^n \SS^1$. 
    The question whether there exists a non-trivial $x$ with $\langle x, y \rangle=0$ for any $y$, 
    analogous to the problem in the context of the universal pairing for $4$-manifolds \cite[Problem 2]{FKNSWW}), is open.
\end{enumerate}
\end{remark}

\section{Simple homotopy equivalent 2-complexes} \label{sec: simple homotopy equivalent}

Theorem \ref{thm: simple homotopy versus 3 deformations}  showed that the universal pairing does not detect the difference between stable equivalence and $3$-deformations for $2$-complexes.
To analyze simple homotopy equivalence, in this section we define a version of the
universal pairing using an equivalence relation that is finer than 3-deformation. 

Recall that elementary $2$-expansions and collapses of $2$-complexes correspond to Tietze moves (ii), (iii) listed in Section \ref{sec: background}. They change the identification of the boundary of $2$-complexes with  $\vee^n\SS^1$ and thus affect the element represented by a $2$-complex in $\mcK_n$. The definition of $\mcK_n$ involved $2$-complexes up to $3$-deformations fixing $\vee^n\SS^1$, that is the moves on $2$-complexes corresponding to the Tietze moves
\begin{equation} \label{eq: 3-def}
R_j\mapsto w R_j w^{-1}, \, R_j\mapsto R_j^{-1}, \, R_j\mapsto R_j R_k \; {\rm or} \; R_j\mapsto R_k R_j\;  {\rm where} \; w\in F_{g_1, \ldots, g_m}, \, j\neq k.
\end{equation}
Consider a restricted version, where in addition to the first two moves in \eqref{eq: 3-def} we have 
\begin{equation} \label{eq: 3-def alg zero}
R_j\mapsto R_j \cdot \prod_i w_i [R^{\pm 1}_{k_i}, h_i]w_i^{-1} \; 
{\rm where} \; w_i, h_i\in F_{g_1, \ldots, g_m}, \; {\rm and} \; k_i\neq j \; {\rm for}\; {\rm each}\; i.
\end{equation}
In other words, only compositions of handle slides of $R_j$ over $R_k$ in \eqref{eq: 3-def} are allowed, $k\neq j$, where the total exponent of $R_k$ is zero. 
Define $\mcK'_n$ to be $2$-complexes with $1$-skeleton identified with $\vee^n\SS^1$, modulo the first two moves in \eqref{eq: 3-def} and the moves \eqref{eq: 3-def alg zero}. This equivalence relation fits in between (4) and (5) in the list in Section \ref{sec: background}.

As discussed below, Quinn \cite{Quinn} showed that a simple homotopy equivalence is a composition of a $3$-deformation, an {\em $s$-move}, and another $3$-deformation. 
Schematically, 
\[
L_1 \stackrel{\mathrm{2-exp}}{\nearrow} L'_1 \stackrel{\mathrm{3-exp}}{\nearrow}
L''_1
\xrsquigarrow{\mathrm{s-moves}}
L''_2 \stackrel{\mathrm{3-col}}{\searrow}
L_2'\stackrel{\mathrm{2-col}}{\searrow} L_2
\]
Here $n$-exp, respectively $n$-col stands for $n$-expansion, respectively $n$-collapse. 
Therefore the key question for the Andrews-Curtis conjecture is whether the $s$-move can be expressed as a $3$-deformation. The following result shows that even with the finer equivalence relation defining $\mcK'_n$, the universal pairing cannot detect the difference.

\begin{theorem}\label{thm: simple homotopy versus 3 deformations+} 
Let $L_1, L_2\in \mcK'_n$ be two $2$-complexes
related by an $s$-move. Then $x:=L_1-L_2\in \kk \mcK'_n$ satisfies $x\cdot x=0$, so in particular  $\langle x, x\rangle=0$.
\end{theorem}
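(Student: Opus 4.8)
The plan is to mimic the proof strategy of Theorem \ref{thm: simple homotopy versus 3 deformations}: show that the relevant products $L_1 \cdot L_1$, $L_1 \cdot L_2$, $L_2 \cdot L_1$, $L_2 \cdot L_2$ all agree in $\mcK'_n$, so that $x \cdot x$ telescopes to zero. The new ingredient is that the equivalence relation on $\mcK'_n$ is finer than $3$-deformation, so the argument of Lemma \ref{lem: L_1.L_2} — which freely used handle slides with arbitrary total exponent — is not directly available. Instead I would exploit the specific structure of an $s$-move. First I would recall Quinn's definition of the $s$-move precisely: an $s$-move replaces a pair of $2$-cells attached along relators $R, R'$ by a pair attached along $R, R'\cdot c$ (or a similarly controlled modification), where $c$ is a product of conjugates of commutators $[R_k^{\pm1}, h]$ with $k$ ranging over the \emph{other} relators — i.e. exactly the kind of modification permitted by the move \eqref{eq: 3-def alg zero}. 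The point is that an $s$-move changes a relator only by a word that is trivial in the abelianization of the relation module, which is precisely the total-exponent-zero condition built into $\mcK'_n$.

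The key steps, in order, would be: (1) Fix presentations for $L_1$ and $L_2$ on the same generating set $\vee^n\SS^1$, differing by a single $s$-move; say $L_1$ has relators $R_1,\dots,R_m$ and $L_2$ has relators $R_1,\dots,R_{m-2}, R_{m-1}, R_m c$ where $c = \prod_i w_i[R_{k_i}^{\pm1},h_i]w_i^{-1}$ with each $k_i \le m-2$ (or $k_i \ne m$), so that passing from $L_1$ to $L_2$ is an instance of \eqref{eq: 3-def alg zero}. (2) Compute $L_1 \cdot L_2$: its presentation has relators $R_1,\dots,R_m$ together with $R_1,\dots,R_{m-2},R_{m-1},R_m c$. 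Using the moves allowed in $\mcK'_n$, slide the duplicate copies: first observe that having two copies of each $R_j$ ($j \le m-1$) available, we can use one copy of $R_m$ from the first factor to convert the relator $R_m c$ from the second factor into $c$ alone — more precisely, replace $R_m c$ by $R_m^{-1}\cdot R_m c = c$? No: that move is $R_j \mapsto R_j R_k$, i.e. multiplication on the right by another relator, which requires passing through the allowed handle-slide words. The cleaner route: since both $R_m$ and $R_m c$ appear among the relators of $L_1\cdot L_2$, and $c$ is itself a product of conjugates of commutators of the \emph{other} relators, I can use the moves \eqref{eq: 3-def alg zero} (applied to the relator $R_m c$, sliding over $R_m$ with total exponent $-1$? — here is where care is needed) to reduce $\{R_m, R_m c\}$ to $\{R_m, 1\}$ or to $\{R_m, R_m\}$; then one trivial relator is a $\SS^2$ wedge summand. (3) Conclude $L_1 \cdot L_2 = L_1 \vee^{\#(2\text{-cells of }L_2)}\SS^2$ in $\mcK'_n$, and symmetrically for the other three products, so all four products coincide.

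The main obstacle I anticipate is precisely step (2): verifying that the bookkeeping of handle slides needed to collapse the duplicated relators stays within the total-exponent-zero moves \eqref{eq: 3-def alg zero} and does not secretly require a genuine handle slide with nonzero total exponent. The subtlety is that sliding $R_m c$ over $R_m$ to cancel it changes the total exponent of $R_m$ in the ambient relator by $\pm 1$, which is \emph{not} allowed in $\mcK'_n$. The resolution is likely to be that one should instead slide over the copy of $R_m$ coming from the \emph{first} factor while keeping the copy from the second factor fixed, treating the two copies as distinct relators so that each individual relator's history of slides has total exponent zero over every \emph{distinct} relator; equivalently, one proves the sharper statement that $L_1 \cdot L_1$ and $L_1 \cdot L_2$ are equivalent in $\mcK'_n$ directly, by noting that the defining $s$-move word $c$ for the pair $(L_1,L_2)$ becomes null-homotopic \emph{rel boundary} once a second copy of the relators $R_1,\dots,R_{m-1}$ is present, via moves of type \eqref{eq: 3-def alg zero}. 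I would also double-check that the $s$-move genuinely has the commutator-of-other-relators form claimed (citing Quinn \cite{Quinn} and Bobtcheva--Quinn, or \cite{Bo}), since that fact is the linchpin that makes $\mcK'_n$ the correct refined setting; if the $s$-move is only known to be total-exponent-zero rather than literally a product of commutators, I would argue instead that the relation module element it represents lies in the appropriate subgroup and that this suffices for the moves in \eqref{eq: 3-def alg zero}.
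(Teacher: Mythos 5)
Your overall frame (show all four products coincide so that $x\cdot x$ telescopes, and aim for $L_1\cdot L_1=L_1\cdot L_2$ rather than passing through wedges of spheres) is the right one, but the proposal rests on a mischaracterization of the $s$-move, and no step in it supplies the actual mechanism. Quinn's $s$-move (Definition \ref{def: s move}) is geometric: $L_1=K\cup_{f(R)}mD^2$ and $L_2=K\cup_{f(S)}mD^2$, where the curves $R_i, S_i$ cobound orientable surfaces $\Sigma_i$ mapped into $K$, and the halves $a_{i,j}$, $b_{i,j}$ of a symplectic basis are joined by annuli $A$, $B$ to the $R$- and $S$-curves respectively. The attaching word $S_i$ differs from $R_i$ by a product of commutators of elements conjugate to the $R_k$ only \emph{modulo the $2$-cells of $K$} (i.e.\ in $\pi_1(K)$), not literally in the free group. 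If the $s$-move really replaced a relator $R_m$ by $R_m c$ with $c$ a free-group product of conjugates of commutators $[R_k^{\pm1},h]$ over the other relators, it would itself be an instance of \eqref{eq: 3-def alg zero}, so $L_1=L_2$ in $\mcK'_n$ and the theorem would be vacuous; the same objection applies to your ``total-exponent-zero'' fallback. So the ``linchpin'' you propose to double-check is false as stated, and the proposal contains no substitute for it.

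The concrete reduction you attempt also fails on its own terms: $L_1\cdot L_2=L_1\vee^{m}\SS^2$ cannot hold in $\mcK'_n$ in general, because the multiset of abelianized relator classes (up to sign) in $H_1(\vee^n\SS^1)$ is preserved by conjugation, inversion and the zero-exponent slides, so a relator can never be converted to the trivial word unless its class already vanishes; you noticed the forbidden exponent-$\pm1$ slide yourself, and the ``treat the two copies as distinct'' dodge does not help, since the zero-exponent condition is imposed over each relator separately. What the paper actually proves (Lemma \ref{lem: s move}) is exactly the sharper statement you mention only as a vague fallback, $L_1\cdot L_1=L_1\cdot L_2=L_2\cdot L_2\in\mcK'_n$, and the proof uses the $s$-move data itself rather than any word-level normal form: inside $L_1\cdot L_2$ both families of disks $D_R$ and $D_S$ are present; the annuli $A$ exhibit each curve $a_{i,j}$ as bounding the disk $A_{i,j}\cup D_R$, so surgering $\Sigma_i$ along these disks produces a homotopy carrying the attaching curves $S$ of $D_S$ onto the attaching curves $R$ of $D_R$, and the surface relation $R_iS_i^{-1}=\prod_j[a_{i,j},b_{i,j}]$ together with the free homotopies through $A$ is what makes the corresponding re-attachment of $D_S$ an allowed move of type \eqref{eq: 3-def alg zero} (slides of $D_S$ over the \emph{other} cells $D_R$ with zero total exponent); the symmetric argument with $B$ and $D_S$ gives $L_1\cdot L_2=L_2\cdot L_2$. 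That geometric input --- the surfaces, the symplectic basis, and the annuli --- is the missing idea; without it the proposal does not prove the theorem.
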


{\em Proof of Theorem \ref{thm: simple homotopy versus 3 deformations+}}
We start by recalling the definition of the {\em s-move} defined by Quinn \cite[2.1]{Quinn}. Consider the family of $2$-complexes assembled of surfaces $\Sigma=\coprod_i \Sigma_i$ and annuli $A=\coprod_{i,j} A_{i,j}, B=\coprod_{i,j} B_{i,j}$. The data for this move is the following:
\begin{itemize} \label{s move data}
    \item Compact connected orientable surfaces  $\Sigma_i$, $1\leq i\leq m$, each one with two boundary components denoted $R=\coprod_i R_i, S=\coprod_i S_i$.
    \item A symplectic basis of simple closed curves $\{a_{i,j}, b_{i,j} \}$ on each surface $\Sigma_i$, $1\leq j\leq {\rm genus}(\Sigma_i)$.
    \item Annuli $A_{i,j}, B_{i,j}$ with $\partial A_{i,j}=a_{i,j}\coprod R_k, \partial B_{i,j}=b_{i,j}\coprod S_l$ for some $k,l$ depending on $i,j$. 
\end{itemize}

\begin{definition}\label{def: s move}
    Two $2$-complexes $L_1, L_2$ are related by a Quinn $s$-move if there exist a $2$-complex $K$, surfaces $\Sigma$ and annuli $A, B$ as above, and a map $f\colon \Sigma\cup A\cup B \longrightarrow K$ with 
    $$ L_1\, =\, K\, \bigcup_{f(R)}\,  mD^2, \; \,  L_2\, =\, K\, \bigcup_{f(S)}\,  mD^2.$$
\end{definition}

Note that homotopies of the attaching maps of $2$-cells are $3$-deformations, cf. \cite[Lemma 2.1]{HM1} and the proof below, so $R,S$ may be assumed to map to the $1$-skeleton of $K$. 
This is an implicit assumption in the above definition, so that $mD^2$ are attached to the $1$-skeleton. On the other hand, the curves $f(a_{i,j}), f(b_{i,j})$ are not assumed to be in the $1$-skeleton.

The data for the $s$-move in the genus $1$ case and $m=1$ is illustrated in Figures \ref{figure:Quinn move}, \ref{figure:Quinn move2}.
(The boundary curves $R, S$ of the surface are drawn as based curves. Generally, a homotopy of the surface $\Sigma$ into this position results in conjugation; this is not shown in the figure.) 
A more elaborate example is given in \cite{Quinn}.

\begin{figure}[ht]
\centering
\includegraphics[height=2.3cm]{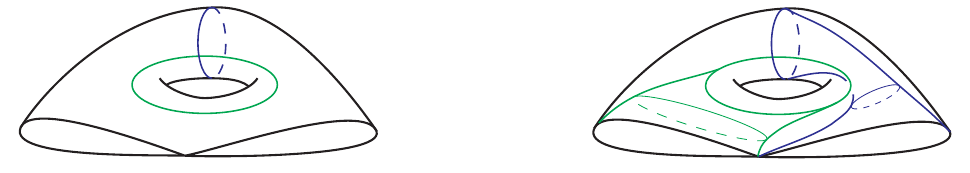}
{\scriptsize
\put(-365,2){$R$}
\put(-227,2){$S$}
\put(-321,25){$a$}
\put(-298,50){$b$}
\put(-148,2){$R$}
\put(-10,2){$S$}
\put(-115,37){$A$}
\put(-60,45){$B$}
}
\caption{The setting for a Quinn  $s$-move in the smallest non-trivial example: a connected, genus $1$ surface $\Sigma$ with boundary $R\cup S$ and a symplectic basis of curves $a, b$. Right: the annulus $A$ is attached to $\Sigma$ along the curves $a, R$ and the annulus $B$ is attached to $\Sigma$ along $b, S$. The union $\Sigma\cup A\cup B$ is mapped to some $2$-complex $K$.}
\label{figure:Quinn move}
\end{figure}

\begin{figure}[ht]
\centering
\includegraphics[height=2.8cm]{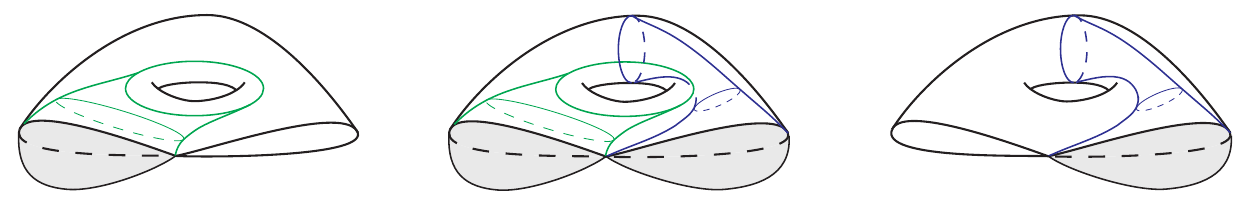}
{\scriptsize
\put(-445,5){$D_R$}
\put(-293,5){$D_R$}
\put(-453,30){$R$}
\put(-316,30){$S$}
\put(-395,58){$a$}
\put(-176,5){$D_S$}
\put(-15,5){$D_S$}
\put(-3,30){$S$}
\put(-140,30){$R$}
\put(-74,61){$b$}
}
\caption{Left: a null-homotopy for $RS^{-1}$ in $\Sigma\cup A\cup D_R$ is provided by the surface $\Sigma$ surgered along the disk $A\cup D_R$ attached to $a$. (The curve $RS^{-1}$ is defined using the induced orientation on the boundary of the surface $\Sigma$.) 
Right: a null-homotopy for $RS^{-1}$ in $\Sigma\cup B\cup D_S$ is given by $\Sigma$ surgered along the disk  $B\cup D_S$ attached to $b$.}
\label{figure:Quinn move2}
\end{figure}

It follows from the definition that $2$-complexes related by an $s$-move are simple homotopy equivalent. In fact, they are related by a $4$-deformation, see \cite[2.4]{Quinn}.

The following result is an analogue of Lemma \ref{lem: L_1.L_2} for $\mcK'_n$; its proof in this case is quite different.
In the lemma below, as in Theorem \ref{thm: simple homotopy versus 3 deformations+}, the entire $1$-skeleton of the $2$-complexes is identified with $\vee^n\SS^1$.

\begin{lemma} \label{lem: s move}
    Let $L_1, L_2\in \mcK'_n$ be $2$-complexes related by an $s$-move. Then $$L_1\cdot L_1=L_1\cdot L_2=L_2\cdot L_2\in \mcK'_n.$$
\end{lemma}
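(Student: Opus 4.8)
The plan is to compute the products $L_1 \cdot L_1$, $L_1 \cdot L_2$ and $L_2 \cdot L_2$ directly in terms of the $s$-move data $(K, \Sigma, A, B, f)$ of Definition~\ref{def: s move}, and to exhibit explicit sequences of the allowed moves (the first two moves of~\eqref{eq: 3-def} together with the ``algebraically zero'' handle slides~\eqref{eq: 3-def alg zero}) identifying all three in $\mcK'_n$. Recall $L_1 = K \cup_{f(R)} mD^2$ and $L_2 = K \cup_{f(S)} mD^2$, where $R = \coprod_i R_i$, $S = \coprod_i S_i$, and each pair $R_i, S_i$ bounds the surface $\Sigma_i$. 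Since the entire $1$-skeleton of each $L_j$ is $\vee^n\SS^1$, the complex $K$ has $1$-skeleton $\vee^n\SS^1$ together with possibly extra $2$-cells (those not coming from $R$ or $S$); write the relators of $K$ as $Q_1, \dots, Q_p$. After a homotopy of the attaching maps into the $1$-skeleton --- a $3$-deformation, legitimate here because it can be arranged as conjugation plus trivial insertions, as noted after Definition~\ref{def: s move} --- the relators $R_i$ and $S_i$ become words $\rho_i, \sigma_i \in F_{g_1,\dots,g_n}$. The key algebraic input is that the relation $R_i S_i^{-1}$ is nullhomotopic in $\Sigma_i \cup A \cup D_R$ (Figure~\ref{figure:Quinn move2}, left): surgering $\Sigma_i$ along the disk $A \cup D_R$ glued to $a_{i,j}$ kills the $a$-curves, so in $\pi_1(K)$ we get $\rho_i \sigma_i^{-1}$ expressed as a product of conjugates of the attaching words of the annular cells $A_{i,j}$ — and crucially the relator $R_{k}$ (for the $2$-cell $D_R$ that $A_{i,j}$ abuts) appears with total exponent zero in this product, because $a_{i,j}$ is a commutator in the surface group and the annulus $A_{i,j}$ contributes $R_k$ and $R_k^{-1}$ symmetrically. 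Symmetrically, $R_i S_i^{-1}$ is killed in $\Sigma_i \cup B \cup D_S$ via the $b$-curves with $S_l$-exponent zero.

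\textbf{Key steps.} First I would record that $L_1 \cdot L_2$ is the $2$-complex with $1$-skeleton $\vee^n\SS^1$ and relators $Q_1,\dots,Q_p, \rho_1, \dots, \rho_m, \sigma_1, \dots, \sigma_m$ — one copy of $K$'s extra relators, one copy of the $R$-disks, one copy of the $S$-disks. Likewise $L_1 \cdot L_1$ has relators $Q_1, \dots, Q_p, \rho_1, \dots, \rho_m, \rho_1', \dots, \rho_m'$ where the primed copies are a second identical set, and similarly for $L_2\cdot L_2$. Second, in $L_1 \cdot L_2$ I would handle-slide each $\sigma_i$ over the relators $\rho_1, \dots, \rho_m$ (and never over the $Q$'s or over $\sigma$'s) using the null-homotopy of $\rho_i\sigma_i^{-1}$ in $\Sigma_i \cup A \cup D_R$: this expresses $\sigma_i \mapsto \rho_i$ up to the moves~\eqref{eq: 3-def alg zero}, since each relevant $\rho_k$ enters with total exponent zero. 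After doing this for all $i$, $L_1 \cdot L_2$ is carried to the complex with relators $Q_\bullet, \rho_\bullet, \rho_\bullet$, which is precisely $L_1 \cdot L_1$. Third, I would run the symmetric argument using the $b$-curves and $\Sigma_i \cup B \cup D_S$: handle-slide each $\rho_i$ (in the ``$\rho$-copy'' coming from the $L_1$ factor, say) over the $\sigma$-relators with $\sigma$-exponent zero, carrying $\rho_i \mapsto \sigma_i$, which takes $L_1 \cdot L_2$ to the complex with relators $Q_\bullet, \sigma_\bullet, \sigma_\bullet = L_2 \cdot L_2$. Composing, $L_1\cdot L_1 = L_1 \cdot L_2 = L_2 \cdot L_2$ in $\mcK'_n$, as claimed. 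Note the forced exponent-zero condition in~\eqref{eq: 3-def alg zero} is exactly what the symplectic-basis structure of $\Sigma_i$ delivers, so this is where the $s$-move hypothesis is used essentially rather than incidentally.

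\textbf{Main obstacle.} The delicate point, and where I expect most of the work to go, is making rigorous the claim that the null-homotopy of $R_i S_i^{-1}$ obtained by surgering $\Sigma_i$ along the $A$-disk expresses $\sigma_i$ in terms of $\rho_i$ using \emph{only} moves of the form~\eqref{eq: 3-def alg zero} — i.e.\ controlling that no $\rho_k$ other than with total exponent zero, and no $\sigma$-relator or $Q$-relator, is ever used in the slide. One must carefully bookkeep: the surgered surface is a disk bounding $R_iS_i^{-1}$ and meeting $A$ once (contributing the pattern $w\, R_k\, w^{-1}\cdots w' R_k^{-1} (w')^{-1}$ along the two copies of $a_{i,j}$ appearing in the boundary word of the surgered surface), and one must verify the resulting expression of $\sigma_i \rho_i^{-1}$ in $N(\rho_1,\dots,\rho_m)$ really is a product of conjugated commutators $w[\rho_k^{\pm},h]w^{-1}$ as in~\eqref{eq: 3-def alg zero}. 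A clean way to organize this is to argue at the level of $\pi_2$ / identity sequences: the surgered surface gives an explicit element of $\pi_2$ of the relevant $2$-complex whose boundary is $R_iS_i^{-1}$, and reading off its expression in the free $\Z\pi_1$-module on the relators shows the $R_k$-coefficient is a difference (hence ``exponent zero''). A secondary, more routine obstacle is the homotopy moving $R, S$ into the $1$-skeleton and keeping track of the resulting conjugations — but those are absorbed into the $w$'s and $w_i$'s of~\eqref{eq: 3-def alg zero} and~\eqref{eq: 3-def}, so they cause no real difficulty once flagged.
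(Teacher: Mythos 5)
Your strategy is essentially the paper's: use the annuli $A$ to replace the $a$-curves by conjugates of the relators $\rho_k$, use the surface relation expressing $R_iS_i^{-1}$ as a product of the commutators $[a_{i,j},b_{i,j}]$ so that the resulting slides are all of the allowed shape $w[\rho_k^{\pm1},h]w^{-1}$, and then run the symmetric argument with $B$ and the $\sigma$'s to reach $L_2\cdot L_2$. One bookkeeping slip first: $L_1\cdot L_2$ is the union of $L_1$ and $L_2$ along $\vee^n\SS^1$, so it contains \emph{two} copies of the $2$-cells of $K$ (it is $K\cdot K\cup D_R\cup D_S$); likewise $L_1\cdot L_1$ has relators $Q_\bullet,Q_\bullet,\rho_\bullet,\rho_\bullet$. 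This is harmless, since the second copy of the $Q$'s is never touched (the paper notes exactly this, ``adding the second copy of $2$-cells of $K$''), but your identification of the products as written is incorrect. Also, the phrase ``because $a_{i,j}$ is a commutator in the surface group'' is not the right justification: $a_{i,j}$ is a basis curve, and the exponent-zero structure comes from $R_iS_i^{-1}$ being a product of the commutators $[a_{i,j},b_{i,j}]$, each substituted factor $[w\rho_k^{\pm1}w^{-1},\beta]$ being conjugate to a commutator of the allowed form.

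The more serious issue is the claim in your second step that each $\sigma_i$ is carried to $\rho_i$ by slides over the $\rho$-relators only, ``never over the $Q$'s''. The identity $R_iS_i^{-1}=\prod_j[a_{i,j},b_{i,j}]$, with $a_{i,j}$ replaced via the annuli, holds in $\pi_1(K)$, not in the free group $F_n$: the interior of $\Sigma_i$, the curves $f(b_{i,j})$ and the annuli map into all of $K$ rather than into the $1$-skeleton. At the level of words one therefore only gets $\sigma_i=\prod_j[w_j\rho_{k_j}^{\pm1}w_j^{-1},\beta_j]\cdot\rho_i'\cdot q_i$ with a correction term $q_i$ in the normal closure of the attaching words of the $2$-cells of $K$; its exponents record how $\Sigma_i$ maps over those cells (the two oppositely oriented copies of each surgery disk $A_{i,j}\cup D_{R_k}$ cancel, which is what gives exponent zero over the $\rho$-cells, but $\Sigma_i$ itself can wrap over the $Q$-cells). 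So in general the transformation cannot avoid the $2$-cells of $K$, and these may only be used in the commutator form of \eqref{eq: 3-def alg zero}. The paper's proof is phrased so that the transformation of the attaching curves $S$ into $R$ takes place in $K\cup D_R$, i.e.\ with all of its $2$-cells available for such slides; your write-up should do the same, and the ``careful bookkeeping'' you rightly identify as the main obstacle must then track the $Q$-contributions of $\Sigma_i$ as well, not only the exponent of the $\rho_k$'s as in your proposal.
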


\begin{proof}[Proof of Lemma \ref{lem: s move}]
We will show that $L_1\cdot L_2$ is equivalent to $L_1\cdot L_1$ with respect to the moves \eqref{eq: 3-def alg zero}. The equivalence with $L_2\cdot L_2$ will follow by a directly analogous argument. In the notation of Definition \ref{def: s move}, $L_1\cdot L_2$ is obtained from $K\cdot K$ by attaching two collections of $2$-cells: $m$ disks attached to $R$, and $m$ disks attached to $S$. Denote these two collections of disks by $D_R, D_S$ respectively. 

The $2$-complex $L_1=K\cup D_R$ is a subcomplex of $L_1\cdot L_2$. 
Consider $$f(\Sigma\cup A)\cup D_R\, \subset\,  K\cup D_R\, \subset\,  L_1\cdot L_2.$$
The annuli $A$ provide a free homotopy between the curves $R$ and half a symplectic basis of curves, $a$, in $\Sigma$. Thus the attaching curves $S$ for $D_S$ can be transformed to the attaching curves $R$ for $D_R$ in $K\cup D_R$ using moves \eqref{eq: 3-def alg zero}. 
In other words, $L_1\cup_S D_S=L_1\cup\,$(a collection of disks attached along $R$) $\in \mcK'_n$.  Note that
$L_1\cdot L_2$ and $L_1\cdot L_1$ are obtained from these two $2$-complexes by adding the second copy of $2$-cells of $K$.
This concludes the proof of Lemma \ref{lem: s move} and of Theorem~\ref{thm: simple homotopy versus 3 deformations+}.
\end{proof}

\begin{remark}
The proof above was given in the context of $2$-complexes; an equivalent proof may be phrased using group presentations.
In fact, an algebraic extension of the $s$-move to derived series was established by Hog-Angeloni and Metzler \cite{HM2}. Using these methods, a version of Theorem \ref{thm: simple homotopy versus 3 deformations+} can be proved for a derived series analogue of the relations \eqref{eq: 3-def alg zero}.

\end{remark}

\section{The universal pairing for higher dimensional complexes} \label{sec: higher dimensions}

So far, we have considered the universal pairing for 2-complexes, but it can be defined over complexes in arbitrary dimensions. The following generalizes Definition \ref{def: kn}. In this section, we will use $n$ to denote the dimension of the complexes.

\begin{definition}
Let $n \ge 2$ and let $L$ be an $(n-1)$-complex. Then define $\mcK_L$ to be the set of equivalence classes of $n$-complexes K with a subcomplex in the $(n-1)$-skeleton of $K$ identified with $L$, considered up to $(n+1)$-deformations restricting to the identity on $L$. Given $K \in \mcK_L$, we refer to $L$ as the \textit{boundary} of $K$.
\end{definition}

As before, $\mcK_L$ comes with a commutative associative multiplication 
\begin{equation}\label{eq_mult_K2}\cdot \ : \ \mcK_L\times \mcK_L\lra \mcK_L
\end{equation}
given by taking the union of two $n$-complexes along the common boundary $L$. The $n$-complex $L$ without $n$-cells is the unit element for  multiplication.

For a commutative ring $\kk$, we obtain the multiplication map 
$\cdot\,\; \colon \; \kk \mcK_L\times \kk \mcK_L\longrightarrow \kk \mcK_L$ and the bilinear pairing $\langle\, , \, \rangle\,\; \colon \; \kk \mcK_L\times \kk \mcK_L\longrightarrow \kk \mcK_\ast$, analogously to the case $n=2$ discussed in Section \ref{sec: universal construction for 2-complexes}.

One key difference in the case $n \ge 3$ is that, as mentioned in the instruction, $(n+1)$-deformation equivalence corresponds precisely to simple homotopy equivalence \cite{Wa1}. In particular, for $n \ge 3$, $\mcK_L$ is the set of $m$-complexes with boundary $L$ up to simple homotopy equivalences restricting to the identity on $L$.

The following result is the analogue of Corollary \ref{cor: null vector not homotopic}.

\begin{theorem} \label{thm: null vector not homotopic - higher dim}
For each $n \ge 3$, there exists an $(n-1)$-complex $L$ and a non-trivial element $x \in \kk \mcK_L$ such that $x \cdot x=0$.  
\end{theorem}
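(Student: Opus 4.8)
The plan is to mimic the strategy used in dimension $2$: produce, for each $n \ge 3$, a pair of $n$-complexes $M_1, M_2$ with a common $(n-1)$-subcomplex $L$ in their boundaries, which have isomorphic $\pi_1$ and the same Euler characteristic but are \emph{not} simple homotopy equivalent rel $L$, and then show that $M_1 \cdot M_1 = M_1 \cdot M_2 = M_2 \cdot M_2$ in $\kk\mcK_L$, so that $x := M_1 - M_2$ satisfies $x \cdot x = 0$. The nontriviality of $x$ amounts to $M_1 \ne M_2$ in $\mcK_L$, i.e. they are not related by an $(n+1)$-deformation fixing $L$, which by Wall's theorem is the same as not being simple homotopy equivalent rel $L$.

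First I would reduce to a source of examples. The cleanest route is to start from the $2$-complexes $K_i$ of Lustig already used in Corollary \ref{cor: null vector not homotopic} (or any pair of non-homotopy-equivalent $2$-complexes with the same $\pi_1$ and $\chi$) and suspend or take products: for instance, replacing each $2$-cell by an $n$-cell over the same $1$-skeleton-type data, or more robustly, using the fact that for each finitely presented group $G$ one can build $n$-dimensional complexes realizing the various algebraic $K$-theoretic obstructions. Concretely, I expect it suffices to take $L$ to be a fixed $(n-1)$-complex carrying the $1$-skeleton (a wedge of circles, or its $(n-1)$-fold suspension-type analogue) and attach $n$-cells along the relators; then the key algebraic input — that presentations of $G$ with the same number of generators and relators define complexes whose ``sum'' is independent of the summand up to the relevant moves — is exactly the computation in Lemma \ref{lem: L_1.L_2}, whose proof is purely Tietze-theoretic and carries over verbatim to $n$-cells attached along $(n-1)$-spheres mapping into the $(n-1)$-skeleton, since homotopies of attaching maps of top cells are $(n+1)$-deformations. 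This gives $M_1 \cdot M_2 = M_1 \vee^m \SS^n = M_2 \vee^l \SS^n$ once we arrange a map $f\colon M_1 \to M_2$ fixing $L$ and inducing an isomorphism on $\pi_1$ (the $n$-dimensional analogue of Lemma \ref{lem: can fix boundary}, again following the same Tietze argument). Combined with $\chi(M_1) = \chi(M_2)$, this forces $M_1 \cdot M_1 = M_1 \cdot M_2 = M_2 \cdot M_2$ and hence $x \cdot x = 0$.

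The main obstacle is the \textbf{nontriviality of $x$}: I need $M_1$ and $M_2$ to be genuinely distinct in $\mcK_L$, i.e. not simple homotopy equivalent rel $L$. In dimension $2$ this used the delicate classification results (Lustig, etc.) that certain $2$-complexes are not even homotopy equivalent. For $n \ge 3$ the situation is in one sense easier: there is an extra invariant available, namely Whitehead torsion, so it is enough to exhibit complexes that are homotopy equivalent but have nontrivial torsion obstruction, and such examples are classical whenever $\mathrm{Wh}(G) \ne 0$ (e.g. $G$ finite cyclic of suitable order). The subtlety is the usual one flagged in Section \ref{sec: background}: one must rule out \emph{all} homotopy equivalences rel $L$ being simple, not just one, so I would choose $G$ and the complexes so that the relevant self-homotopy-equivalences act on the Whitehead group in a controlled way (e.g. trivially, or by a subgroup not containing the relevant torsion class), exactly as in \cite{NNP}. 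Alternatively, and perhaps more cheaply, one can suspend the $2$-dimensional Lustig examples: a non-homotopy-equivalence of $2$-complexes with the same $\pi_1, \chi$ should suspend to a non-simple-homotopy-equivalence of $n$-complexes, and the homotopy classification of such suspensions is governed by stable computations that are easier than the unstable $2$-dimensional ones.

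Thus the skeleton of the argument is: (1) state and prove the $n$-dimensional analogues of Lemmas \ref{lem: can fix boundary} and \ref{lem: L_1.L_2} — essentially word-for-word, replacing ``$2$-deformation'' by ``$(n+1)$-deformation'' and ``$\SS^2$'' by ``$\SS^n$'', using that homotopies of top-cell attaching maps are elementary expansions/collapses in dimension $\le n+1$; (2) conclude $x \cdot x = 0$ for $x = M_1' - M_2'$ exactly as in the proof of Theorem \ref{thm: simple homotopy versus 3 deformations}; (3) supply the examples $M_1, M_2$: take $L$ an appropriate $(n-1)$-complex and $M_1, M_2$ built from a group $G$ with $\mathrm{Wh}(G) \ne 0$ (or suspensions of the Lustig examples) chosen so that no homotopy equivalence rel $L$ is simple, invoking the torsion/homotopy-automorphism analysis. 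Step (3), the construction and verification of the examples, is where the real work lies; steps (1)–(2) are routine transpositions of the $2$-dimensional proofs.
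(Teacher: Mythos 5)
Your step (1) --- the claim that Lemmas \ref{lem: can fix boundary} and \ref{lem: L_1.L_2} carry over ``word-for-word'' to $n$-cells, with only a $\pi_1$-isomorphism (plus equal $\chi$) as hypothesis --- is a genuine gap, and in fact is false as stated. The $2$-dimensional proof of Lemma \ref{lem: L_1.L_2} works because $2$-cells are attached along elements of the free group $\pi_1(\vee^n\SS^1)$, so a $\pi_1$-isomorphism forces each relator of $L_2$ to be a product of conjugates of relators of $L_1$, which is exactly what the Tietze computation uses. For $n\ge 3$ the attaching maps of $n$-cells are classes in $\pi_{n-1}$ of the $(n-1)$-skeleton, a $\Z[\pi_1]$-module over which a $\pi_1$-isomorphism gives no control whatsoever. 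Concretely, take $n=3$, $L=\SS^2$, $M_1 = \SS^2\cup_2 e^3$ (degree-$2$ attachment) and $M_2=\SS^2\cup_1 e^3$: both are simply connected with equal $\chi$, and the inclusion $L\hookrightarrow M_2$ extends to a map $M_1\to M_2$ fixing $L$, yet $M_1\cdot M_1$ has $H_2\cong\Z/2$ while $M_1\cdot M_2$ has $H_2=0$, so $M_1\cdot M_1\ne M_1\cdot M_2$ and $x\cdot x\ne 0$. To make a product formula work one needs the attaching classes of one complex to lie in the $\Z[\pi_1]$-submodule generated by those of the other, which is why the paper restricts to $(G,n)$-complexes ($\pi_i=0$ for $2\le i\le n-1$): it proves that a product of $(G,n)$-complexes with common $(n-1)$-skeleton is again a $(G,n)$-complex (a chain-level argument on universal covers), and then, for \emph{finite} $G$, invokes Dyer's theorems --- in a relative form (Lemma \ref{lem: Dyer+}), using \cite[Theorem 8.2]{Johnson2} and the realizability of chain maps, plus \cite[Theorem 2]{Dy81} to kill Whitehead torsion --- to conclude that all four products $L_i\cdot L_j$ are simple homotopy equivalent rel $L$, hence equal in $\mcK_L$ by Wall. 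This is why the paper describes its argument as ``fundamentally different'' from the $2$-dimensional one, rather than a transposition of it.

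Your step (3) also diverges from, and is weaker than, the paper's route to nontriviality. The paper takes $L_1\not\simeq L_2$ outright (Sieradski--Dyer examples of non-homotopy-equivalent $(G,n)$-complexes, finite abelian $G$, with equal $\chi$, identical $(n-1)$-skeleta and at least two $n$-cells), so $L_1\ne L_2$ in $\mcK_L$ is immediate. Your proposed alternatives carry unresolved work: the Whitehead-torsion route requires showing that \emph{no} homotopy equivalence rel $L$ is simple, i.e.\ controlling the action of self-equivalences on $\mathrm{Wh}(G)$ (and note that for finite $G$, \cite[Theorem 2]{Dy81} says every torsion is realized by a self-equivalence, which works \emph{against} you for $(G,n)$-complexes); and suspending the Lustig examples is dubious, since $K_i\vee\SS^2\simeq K_j\vee\SS^2$ gives $\Sigma K_i\vee\SS^3\simeq \Sigma K_j\vee\SS^3$, and in this simply-connected stable range the distinction is likely to disappear rather than persist.
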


To achieve this, we will focus on a special class of $n$-complexes which are more amenable to computations of this sort. 
Recall that, for a group $G$, a \textit{$(G,n)$-complex} is an $n$-complex $X$ with the $(n-1)$-type of the Eilenberg-Maclane space $K(G,1)$. That is, $X$ is an $n$-complex such that $\pi_1(X) \cong G$ and $\pi_i(X)=0$ for $2 \le i \le n-1$. 
Similarly to the case of 2-complexes (see Section \ref{sec: background}), we can ask when there exist $(G,n)$-complexes $L_1$ and $L_2$ such that $\chi(L_1)=\chi(L_2)$ but which are not homotopy equivalent. Examples are known in the following cases. 
\begin{itemize}
\item 
For finite groups and all $n \ge 3$, examples are known for certain abelian groups by Sieradski-Dyer \cite{SD79} and certain groups with periodic cohomology by Dyer \cite{Dy76} and Nicholson \cite{N20}.
\item
For infinite groups with finite cohomological dimension and all $n \ge 3$, examples were constructed by Harlander-Jenson \cite{HJ2}, and Nicholson \cite{N21} constructed examples with arbitrary Euler characteristic away from the optimal value.
\end{itemize}

From this point onwards, one could 
aim to generalize Lemmas \ref{lem: can fix boundary} and \ref{lem: L_1.L_2} from 2-complexes for $(G,n)$-complexes in order to obtain a result analogous to Theorem \ref{thm: simple homotopy versus 3 deformations}. 
Instead, focusing on the proof of Theorem \ref{thm: null vector not homotopic - higher dim}, we will demonstrate a simple approach which works in a special case. 

\begin{theorem} \label{thm: simple homotopy special case}
Let $n \ge 3$, let $G$ be a finite group and let $L_1$ and $L_2$ be $(G,n)$-complexes which are not homotopy equivalent but $\chi(L_1)=\chi(L_2)$, $L_1^{(n-1)}=L_2^{(n-1)}$ and $L_1$, $L_2$ have at least two $n$-cells.
Fix identifications $L=L_1^{(n-1)}=L_2^{(n-1)}$ so that $L_1, L_2 \in \mcK_L$. 

Then $x:=L_1-L_2 \in \kk \mcK_L$ satisfies $x\cdot x=0$, so in particular $x$ is a null vector for the universal pairing: $\langle x, x\rangle=0$.    
\end{theorem}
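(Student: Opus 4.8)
The plan is to mimic the structure of the proof of Theorem \ref{thm: simple homotopy versus 3 deformations}, but to exploit the strong hypotheses ($L_1^{(n-1)}=L_2^{(n-1)}$ and both complexes are $(G,n)$-complexes of equal Euler characteristic) to avoid needing the analogue of Lemma \ref{lem: can fix boundary}. The boundary is already fixed for us: $L = L_1^{(n-1)}=L_2^{(n-1)}$, so $L_1, L_2 \in \mcK_L$ from the start, and we only need an analogue of Lemma \ref{lem: L_1.L_2}. Concretely, I would first compute $L_1\cdot L_2$, which by definition is $L$ with both sets of $n$-cells of $L_1$ and of $L_2$ attached along their respective attaching maps in $L^{(n-1)}=L^{n-1}=L$. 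The goal is to show, via $(n+1)$-deformations fixing $L$, that this equals $L_1 \vee^m \SS^n$, where $m$ is the number of $n$-cells of $L_2$; by symmetry it also equals $L_2 \vee^{\ell}\SS^n$ with $\ell$ the number of $n$-cells of $L_1$, and since $\chi(L_1)=\chi(L_2)$ forces $\ell = m$, all four products $L_i\cdot L_j$ agree, giving $x\cdot x = 0$ exactly as in the $2$-dimensional case.

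The key step is therefore: the attaching maps $S^{n-1}\to L$ of the $n$-cells of $L_2$ are, one at a time, homotopic \emph{rel nothing} into $L_1$ (i.e.\ become nullhomotopic once the $n$-cells of $L_1$ are also present), and this homotopy can be realized by a sequence of $(n+1)$-deformations fixing $L$. For the first part, note that each $n$-cell of $L_2$ has attaching map representing an element of $\pi_{n-1}(L)$; since $L_1$ is a $(G,n)$-complex, $\pi_{n-1}(L_1)=0$ whenever $n\ge 3$ actually we must be slightly more careful: $\pi_{n-1}(L_1)=0$ holds because $L_1$ has the $(n-1)$-type of $K(G,1)$, so $\pi_i(L_1)=0$ for $2\le i\le n-1$, and in particular the image of $\pi_{n-1}(L)$ in $\pi_{n-1}(L_1)$ is zero. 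Hence each attaching map of an $n$-cell of $L_2$ bounds a map of $D^n$ in $L_1 = L\cup(\text{$n$-cells of }L_1)$. Replacing that attaching map with the constant map is an elementary $(n+1)$-expansion-and-collapse across this disk, exactly the higher-dimensional version of the Tietze-move argument in Lemma \ref{lem: L_1.L_2}: sliding the $n$-cell over the $n$-cells of $L_1$ (handle slides, which are $(n+1)$-deformations) to trivialize it, producing an $\SS^n$ wedge summand. Doing this for all $m$ of the $n$-cells of $L_2$ converts $L_1\cdot L_2$ into $L_1 \vee^m \SS^n$, all moves fixing $L$.

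The main obstacle I expect is making the handle-slide / $(n+1)$-deformation step rigorous in the CW category for $n\ge 3$, where one cannot simply invoke the explicit Tietze-move dictionary available for group presentations. The substitute is the standard fact that a homotopy of the attaching map of a top cell is realized by a sequence of elementary expansions and collapses involving one cell of dimension $n+1$ (the higher analogue of \cite[Lemma 2.1]{HM1}), together with the observation that "subtracting off" a factor of the form (image of an attaching map of a cell of $L_1$) from the attaching map of a cell of $L_2$ is precisely a cell slide over that $L_1$-cell. One should also check that nothing in these moves disturbs $L$ or the other $n$-cells: the homotopy of a given attaching map takes place in $L_1^{(n)} \subset L_1\cdot L_2$, so the other $n$-cells of $L_2$ and the second copy of $n$-cells of $L$ — wait, there is only one copy of $L$ here since the boundary is identified — are unaffected. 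A secondary, purely bookkeeping point is confirming the cell count: $\chi(L_i) = \chi(L) + (-1)^n \#\{n\text{-cells of }L_i\}$, so $\chi(L_1)=\chi(L_2)$ gives $\ell = m$, which is what makes $L_1\vee^m\SS^n = L_2\vee^{\ell}\SS^n$ and closes the argument; the non-triviality of $x$ follows because $L_1$ and $L_2$ are not homotopy equivalent, hence not $(n+1)$-deformation equivalent, hence distinct in $\mcK_L$ and linearly independent in $\kk\mcK_L$.
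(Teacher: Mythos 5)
Your proof is correct, but it takes a genuinely different route from the paper: you prove the higher-dimensional analogue of Lemma \ref{lem: L_1.L_2} directly, which is precisely the alternative (``generalize Lemmas \ref{lem: can fix boundary} and \ref{lem: L_1.L_2}'') that the authors mention and then set aside. Your key step is sound: each $n$-cell of $L_2$ is attached along a map $S^{n-1}\to L$ which becomes nullhomotopic in $L_1$ because $\pi_{n-1}(L_1)=0$ for $n\ge 3$, and a homotopy of the attaching map of a top-dimensional cell is realized by one elementary $(n+1)$-expansion followed by one $(n+1)$-collapse that fix everything except the cell being traded (the higher analogue of \cite[Lemma 2.1]{HM1}); trading the cells one at a time gives $L_1\cdot L_2=L_1\vee^m\SS^n=L_2\vee^{\ell}\SS^n$ and $L_i\cdot L_i=L_i\vee^m\SS^n$ in $\mcK_L$, and since $\chi(L_1)=\chi(L_2)$ together with the common $(n-1)$-skeleton forces $\ell=m$, all four products agree and $x\cdot x=0$; non-triviality of $x$ follows as you say. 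The paper argues instead that each product $L_i\cdot L_j$ is again a $(G,n)$-complex (Lemma \ref{lem: product (G,n)-complexes}), then applies Dyer's homotopy and simple-homotopy classification of $(G,n)$-complexes for finite $G$ \cite{Dy81}, upgraded to a statement rel $L$ via \cite[Theorem 8.2]{Johnson2} (Lemma \ref{lem: Dyer+}), and finally Wall's theorem \cite{Wa1} to turn rel-$L$ simple homotopy equivalences into $(n+1)$-deformations fixing $L$. That is exactly why the theorem carries the hypotheses that $G$ is finite and that $L_1,L_2$ have at least two $n$-cells (the latter feeds the Euler-characteristic stability condition in Lemma \ref{lem: Dyer+}); your argument uses neither hypothesis, so it is more elementary, proves the statement in greater generality, and is closer in spirit to the proof of Theorem \ref{thm: simple homotopy versus 3 deformations}, whereas the paper's route trades the cell-by-cell geometric bookkeeping for citation-level classification results. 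The only points you should write out carefully are the ones you already flag: the expansion/collapse pair realizing the homotopy must be checked to fix $L$ and the remaining $n$-cells (it does, since the homotopy runs inside $L_1\subset L_1\cdot L_2$ and the two moves involve only the traded $n$-cell and one $(n+1)$-cell), and the wedge points of the trivially attached spheres should be normalized to a $0$-cell of $L$, again by a homotopy of (constant) attaching maps fixing $L$.
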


The following shows that the hypothesis of Theorem \ref{thm: simple homotopy special case} can be satisfied.

\begin{lemma} \label{lem: special examples}
For each $n \ge 3$, there exist a finite group $G$ and $(G,n)$-complexes $L_1$ and $L_2$ which are not homotopy equivalent but $\chi(L_1)=\chi(L_2)$, $L_1^{(n-1)}=L_2^{(n-1)}$ and $L_1$, $L_2$ have at least two $n$-cells.
\end{lemma}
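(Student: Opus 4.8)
The plan is to extract the required examples directly from the known constructions of stably-equivalent but homotopy-inequivalent $(G,n)$-complexes cited just above the lemma (Sieradski--Dyer \cite{SD79}, Dyer \cite{Dy76}, Nicholson \cite{N20}), and then arrange the two extra conditions: equal $(n-1)$-skeleta, and at least two $n$-cells. First I would fix a finite group $G$ and a pair of $(G,n)$-complexes $X_1, X_2$ with $\chi(X_1)=\chi(X_2)$ that are not homotopy equivalent; for $G$ a suitable finite abelian group or a group with periodic cohomology such a pair exists for every $n\ge 3$ by the references. The homotopy type of a $(G,n)$-complex is determined by its "algebraic $n$-type", i.e. by a stably-free $\Z G$-module (the $n$-th homotopy module $\pi_n$), and two such complexes with the same Euler characteristic are always \emph{stably} homotopy equivalent: after wedging on copies of $S^n$ their $\pi_n$ modules become free of the same rank and the complexes become homotopy equivalent. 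This is exactly the $(G,n)$-complex analogue of the stabilization statement for $2$-complexes recalled in Section \ref{sec: background}.

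The key step is to replace $X_1, X_2$ by homotopy equivalent complexes with a common $(n-1)$-skeleton. Since $X_1$ and $X_2$ are both $(G,n)$-complexes, their $(n-1)$-skeleta are $(G,n-1)$-complexes, hence have the same homotopy type as any chosen $(n-1)$-complex $L$ modelling the $(n-1)$-type of $K(G,1)$ of the appropriate Euler characteristic; using a common refinement one may take $L$ large enough that $X_i$ is homotopy equivalent to an $n$-complex $L_i'$ with $(L_i')^{(n-1)}=L$. Concretely: choose a finite presentation-type $(n-1)$-complex $L$ for $G$ (e.g.\ the $(n-1)$-skeleton of a chosen finite $K(G,1)^{(n)}$), and build $L_1, L_2$ by attaching $n$-cells to $L$ along attaching maps representing generators of the respective $\pi_n$-modules over $\Z G$. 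Because the modules involved are stably free of the same rank, we can choose the \emph{same} number of $n$-cells for both, so $\chi(L_1)=\chi(L_2)$ automatically; wedging on one further copy of $S^n$ to each (which does not change homotopy type relative to the previous requirement and preserves non-homotopy-equivalence and equal $\chi$) guarantees at least two $n$-cells. The resulting $L_1, L_2$ satisfy all four conditions in the statement of Theorem \ref{thm: simple homotopy special case}.

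The main obstacle I anticipate is the bookkeeping needed to make the $(n-1)$-skeleta literally equal rather than merely homotopy equivalent: one must check that the cellular attaching maps of the $n$-cells of $X_i$ can be homotoped into a fixed model $L$ without changing the homotopy type of the total complex, and that this homotoping does not secretly identify $L_1$ with $L_2$ up to homotopy. The first point is standard cellular approximation together with the fact that homotopic attaching maps give homotopy equivalent complexes; the second is immediate because $L_1\simeq X_1\not\simeq X_2\simeq L_2$. A secondary point to verify is that adding a single $S^n$ wedge summand to each preserves the inequivalence $L_1\not\simeq L_2$ --- this uses that the examples from \cite{SD79,Dy76,N20} are chosen to be non-homotopy-equivalent already after one stabilization, or equivalently that their $\pi_n$-modules remain non-isomorphic after adding one free summand, which is part of what those constructions establish (they typically produce infinitely many distinct types, so stability under a single wedge is harmless). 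With these checks in place, the lemma follows.
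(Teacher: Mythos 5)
Your overall strategy differs from the paper's: the paper simply observes that the Sieradski--Dyer examples are \emph{already constructed} by attaching $n$-cells to a common $(n-1)$-skeleton and already have at least two $n$-cells (citing \cite[Proof of Proposition 6, Proposition 8]{SD79}), whereas you try to start from an arbitrary homotopy-inequivalent pair and force the two extra conditions afterwards. That general reduction could in principle be made to work, but as written it has two genuine gaps. First, your claim that the $(n-1)$-skeleta of $X_1,X_2$ ``have the same homotopy type as any chosen $(n-1)$-complex $L$ \dots of the appropriate Euler characteristic'' is false in general: two $(G,n-1)$-complexes with equal Euler characteristic need not be homotopy equivalent --- this is exactly the non-cancellation phenomenon the lemma is about, one dimension down. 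The hedge ``take $L$ large enough'' gestures at stabilization, but the actual argument is missing: one must stabilize each skeleton by wedging copies of $S^{n-1}$ while simultaneously attaching cancelling $n$-cells (so the homotopy type of the total complex is preserved), invoke the stable classification of $(G,n-1)$-complexes to get a homotopy equivalence $h$ between the stabilized skeleta, and then transport the attaching maps of one complex through $h$ to obtain literally equal skeleta. Your alternative description --- ``attach $n$-cells to $L$ along generators of the respective $\pi_n$-modules'' --- is not a construction that visibly reproduces the homotopy types of $X_1$ and $X_2$ (the attaching maps live in $\pi_{n-1}(L)$, and realizing two prescribed inequivalent homotopy types on a fixed $L$ is precisely what has to be proved, not a consequence of the modules being stably free of equal rank).

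Second, the last step (wedging one copy of $S^n$ onto each complex to guarantee two $n$-cells) is not justified and may destroy the examples. For finite $G$ there are only finitely many homotopy types of $(G,n)$-complexes at each Euler characteristic level, so the ``infinitely many distinct types'' reasoning does not apply; and by Dyer's theorem \cite[Theorem 3]{Dy81} (used as Lemma \ref{lem: Dyer+} in the paper) all $(G,n)$-complexes at levels at least two above the minimal Euler characteristic are homotopy equivalent, so non-equivalence certainly does not survive repeated wedging with $S^n$, and whether it survives a single wedge is a nontrivial cancellation question about the specific examples, not something you may assume. The harmless way to create extra $n$-cells is the one compatible with your first step: enlarge the common skeleton by a wedge summand $S^{n-1}$ and attach to each complex an $n$-cell along it (an elementary expansion), which preserves both homotopy types, the equality of skeleta, and the Euler characteristics. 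With these two repairs your argument would go through, but as it stands the proof has real gaps that the paper's citation-based proof avoids entirely.
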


\begin{proof}[Proof of Lemma \ref{lem: special examples}]
 This follows from the examples of Sieradski-Dyer \cite{SD79} in the case where $G$ is finite abelian. In fact, for a fixed finite group $G$, all $(G,n)$-complexes constructed in the article have the same $(n-1)$-skeleta and at least two $n$-cells \cite[Proof of Proposition 6]{SD79}. The existence of examples for each $n \ge 3$ follows by substituting values into \cite[Proposition 8]{SD79}.
Examples of Dyer \cite{Dy76} and Nicholson \cite{N20} also imply Lemma \ref{lem: special examples} in the case of certain finite groups with periodic cohomology.    
\end{proof}

We now turn to the proof of Theorem \ref{thm: simple homotopy special case}. The argument we give requires having dimensions $n \ge 3$ and so is fundamentally different to the proof of Theorem \ref{thm: simple homotopy versus 3 deformations}.

We will make use of the following general fact, which applies for all $n$ and $G$.

\begin{lemma} \label{lem: product (G,n)-complexes}
Let $n \ge 2$, let $G$ be a group and let $L_1, L_2$ be $(G,n)$-complexes equipped with identifications $L=L_1^{(n-1)}=L_2^{(n-1)}$ so that $L_1,L_2 \in \mcK_L$. Then $L_1 \cdot L_2$ is a $(G,n)$-complex.
\end{lemma}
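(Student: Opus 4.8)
\textbf{Proof plan for Lemma \ref{lem: product (G,n)-complexes}.}

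The plan is to check directly that $L_1 \cdot L_2$ has the right dimension, the right fundamental group, and vanishing homotopy groups in the intermediate range. First I would observe that $L_1 \cdot L_2$ is built from $L = L_1^{(n-1)} = L_2^{(n-1)}$ by attaching the $n$-cells of $L_1$ and the $n$-cells of $L_2$; hence it is an $n$-complex with $(n-1)$-skeleton equal to $L$. Since $L_1 \subset L_1 \cdot L_2$ and $L_2 \subset L_1 \cdot L_2$ are obtained by collapsing the extra $n$-cells, and $n$-cells do not affect $\pi_1$, we get $\pi_1(L_1 \cdot L_2) \cong \pi_1(L) \cong \pi_1(L_1) \cong G$. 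So what remains is to show $\pi_i(L_1 \cdot L_2) = 0$ for $2 \le i \le n-1$.

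For the intermediate homotopy groups I would pass to the universal cover $\widetilde{L_1 \cdot L_2}$, which by the Hurewicz theorem reduces the claim to showing $H_i(\widetilde{L_1 \cdot L_2}) = 0$ for $2 \le i \le n-1$ (using that the space is simply connected after passing to the universal cover and that the first potentially nonvanishing higher homotopy group agrees with homology). The cellular chain complex of the universal cover is a complex of free $\Z G$-modules, and since $L_1 \cdot L_2$ has the same $(n-1)$-skeleton as $L_1$, the chain complexes $C_*(\widetilde{L_1 \cdot L_2})$ and $C_*(\widetilde{L_1})$ agree in degrees $\le n-1$; they only differ in degree $n$, where $C_n(\widetilde{L_1 \cdot L_2}) = C_n(\widetilde{L_1}) \oplus C_n(\widetilde{L_2})$ as $\Z G$-modules (the $n$-cells of $L_1$ together with those of $L_2$), with the boundary map being the direct sum of $\partial^{L_1}_n$ and $\partial^{L_2}_n$. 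Therefore $H_i(\widetilde{L_1 \cdot L_2}) = H_i(\widetilde{L_1})$ for $2 \le i \le n-2$, and these vanish because $L_1$ is a $(G,n)$-complex. In degree $n-1$, the only change relative to $C_*(\widetilde{L_1})$ is that the group of $n$-boundaries can only get larger (we have added the image of $\partial^{L_2}_n$, and this image already lies in the image of $\partial^{L_1}_n$ since $L_2$ is also a $(G,n)$-complex, so $H_{n-1}$ is actually unchanged); hence $H_{n-1}(\widetilde{L_1 \cdot L_2}) = H_{n-1}(\widetilde{L_1}) = 0$.

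I expect the main obstacle — really the only point requiring care — to be the $H_{n-1}$ computation: one must be sure that attaching the extra $n$-cells of $L_2$ does not create new $(n-1)$-cycles (it cannot, since the $(n-1)$-chains are unchanged) and does kill everything that needs killing (it does, since the $L_1$-part of the boundary already does so). An alternative, perhaps cleaner, phrasing avoiding chain-level bookkeeping: both $L_1$ and $L_2$ are $n$-dimensional models for the $(n-1)$-type $B$ of $K(G,1)$, so each admits a map to the $n$-skeleton of $K(G,1)$ (rel $L$) inducing isomorphisms on $\pi_i$ for $i \le n-1$; these glue to a map $L_1 \cdot L_2 \to (K(G,1))^{(n)}$ which is still an isomorphism on $\pi_i$ for $i \le n-1$ by a Mayer--Vietoris / van Kampen argument along $L$, and this exhibits $L_1 \cdot L_2$ as a $(G,n)$-complex. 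I would present the chain-complex version as the main argument since it is the most self-contained.
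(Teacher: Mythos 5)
Your main argument is essentially the paper's own proof: pass to the universal cover, reduce to homology via Hurewicz, note that $C_*(\widetilde{L_1\cdot L_2})$ agrees with the chain complex of $\widetilde{L}$ below degree $n$ while $C_n(\widetilde{L_1\cdot L_2})=C_n(\widetilde{L_1})\oplus C_n(\widetilde{L_2})$, and use that $\mathrm{im}(\partial_n^{L_1})=\ker(\partial_{n-1})=\mathrm{im}(\partial_n^{L_2})$ (since $\pi_{n-1}(L_i)\cong H_{n-1}(\widetilde{L_i})=0$) to conclude $H_{n-1}(\widetilde{L_1\cdot L_2})=0$; this is exactly the computation in the paper, and your handling of the degrees $2\le i\le n-2$ matches as well.

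One caveat: your justification of the fundamental group step, ``$n$-cells do not affect $\pi_1$,'' is valid only for $n\ge 3$, whereas the lemma is stated for $n\ge 2$. For $n=2$ the conclusion about $\pi_1$ can genuinely fail without extra hypotheses: take $L=\SS^1\vee\SS^1$ with generators $x,y$, let $L_1$ have one $2$-cell attached along $x$ and $L_2$ one $2$-cell attached along $y$; both are $(\Z,2)$-complexes with the same $1$-skeleton, yet $\pi_1(L_1\cdot L_2)=1$. In dimension $2$ one needs the kind of hypothesis appearing in Lemma \ref{lem: L_1.L_2} (a map $L_1\to L_2$ rel the $1$-skeleton inducing a $\pi_1$-isomorphism). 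The paper's proof is in fact silent about $\pi_1$ and only ever applies the lemma with $n\ge 3$, where your observation that the $2$-skeleta of $L_1$ and $L_1\cdot L_2$ coincide settles it; just restrict that step to $n\ge 3$ (or add the rel-skeleton $\pi_1$ hypothesis for $n=2$) rather than asserting it in general.
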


\begin{proof}[Proof of Lemma \ref{lem: product (G,n)-complexes}]
Let $K=L_1 \cdot L_2$.
Since $K$ and $L_1$ have the same $(n-1)$-skeleton, it follows that $\pi_i(K)=0$ for $2 \le i \le n-2$. Let $\widetilde{K}$ denote the universal cover of $K$. By the Hurewicz theorem and standard facts about homotopy groups, we have $\pi_{n-1}(K) \cong \pi_{n-1}(\widetilde{K}) \cong H_{n-1}(\widetilde{K})$ and so it suffices to check that $H_{n-1}(\widetilde{K})=0$. By construction, the cellular chain complex $C_*(\widetilde{K})$ has the form
\[
\begin{tikzcd}[row sep=tiny]
C_n(\widetilde{L_1}) \arrow[dr,"\partial_n^{L_1}"] & &&&& \\
\oplus & C_{n-1}(\widetilde{L}) \arrow[r,"\partial_{n-1}"] & \cdots \arrow[r,"\partial_{2}"] & C_{1}(\widetilde{L})  \arrow[r,"\partial_{1}"] & C_{0}(\widetilde{L}) \\
C_n(\widetilde{L_2}) \arrow[ur,"\partial_n^{L_2}"'] & &&&&
\end{tikzcd}
\]
where, for $i=1,2$, the sequence $(\partial_n^{L_i},\partial_{n-1},\cdots,\partial_1)$ corresponds to $C_*(\widetilde{L_i})$.

For $i=1,2$, since $L_i$ is a $(G,n)$-complex, we have that $H_{n-1}(\widetilde{L_i}) \cong \pi_{n-1}(L_i)=0$ and so $\text{im}(\partial_n^{L_i})=\ker(\partial_{n-1})$. It follows that $\text{im}(\partial_n^{L_1}) = \text{im}(\partial_n^{L_2})$ and so
\[ \text{im}(\partial_n^{L_1},\partial_n^{L_2}) = \text{im}(\partial_n^{L_1}) + \text{im}(\partial_n^{L_2}) = \text{im}(\partial_n^{L_1}) = \ker(\partial_{n-1}) \]
which implies that $H_{n-1}(\widetilde{K}) = 0$, as required.    
\end{proof}

The following generalizes a result of \cite[Theorem 3]{Dy81} to the relative case.

\begin{lemma} \label{lem: Dyer+}
Let $n \ge 2$, let $G$ be a finite group and let $L_1, L_2$ be $(G,n)$-complexes with identifications $L=L_1^{(n-1)}=L_2^{(n-1)}$ and $(-1)^n\chi(L_1) = (-1)^n\chi(L_2) \ge 2 + (-1)^n \chi(L_0)$ for some $(G,n)$-complex $L_0$. Then $L_1$ and $L_2$ are simple homotopy equivalent by a map which restricts to the identity on $L$.    
\end{lemma}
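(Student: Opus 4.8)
The plan is to reduce the relative statement to the absolute one, namely \cite[Theorem 3]{Dy81}, which asserts that two $(G,n)$-complexes for a finite group $G$ with equal Euler characteristic lying in the ``stable range'' (i.e. at least $2$ above the minimal Euler characteristic of a $(G,n)$-complex, with the sign $(-1)^n$) are simple homotopy equivalent. The extra content here is (a) that the simple homotopy equivalence can be chosen to restrict to the identity on the common $(n-1)$-skeleton $L$, and (b) that we do not need to know the precise minimal Euler characteristic — it suffices to have \emph{some} $(G,n)$-complex $L_0$ witnessing the bound. First I would use the product construction and Lemma \ref{lem: product (G,n)-complexes}: form $L_i \cdot L_0 \in \mcK_L$, which is again a $(G,n)$-complex, and note $(-1)^n\chi(L_i\cdot L_0) = (-1)^n\chi(L_i) + (-1)^n\chi(L_0) - (-1)^n\chi(L)$; this raises the Euler characteristic in a controlled way while keeping the $(n-1)$-skeleton fixed, so without loss of generality one may assume $L_1, L_2$ themselves already sit comfortably in the range where the absolute classification applies.

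The heart of the argument is to upgrade an abstract simple homotopy equivalence $L_1 \simeq_s L_2$ to one fixing $L$. The key structural input is that, for a finite group $G$ and $n\ge 3$, all $(G,n)$-complexes with the same Euler characteristic and the same $(n-1)$-skeleton $L$ are obtained from one another by stabilizing with wedges of $\SS^n$ and then simplifying: concretely, $L_1 \vee^a \SS^n$ and $L_2 \vee^b \SS^n$ are simple-homotopy equivalent rel $L$ once $a, b$ are large enough (this is the relative analogue of Wall's stability / the fact that the module $\pi_n$ is stably free over $\Z G$ of the right rank, together with cancellation of free $\Z G$-summands in the stable range for finite $G$ — this last cancellation is exactly Dyer's input). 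Since adding a free $\Z G$ summand to $\pi_n$ corresponds to wedging on an $\SS^n$ at a point of $L$, these stabilizations and destabilizations can all be realized by elementary $n$- and $(n+1)$-expansions and collapses that are supported away from $L$, hence fix $L$ pointwise. Chaining them gives a simple homotopy equivalence $L_1 \to L_2$ restricting to $\id_L$.

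Concretely I would carry it out in the following order: (1) replace $L_i$ by $L_i\cdot L_0$ using Lemma \ref{lem: product (G,n)-complexes} to arrange the Euler characteristic to be as large as needed, noting this changes nothing about the conclusion since $L_i \cdot L_0$ is $(G,n)$ and the product preserves $L$; (2) invoke \cite[Theorem 3]{Dy81} to get that the relevant $(G,n)$-complexes over $L$ with common $\chi$ form a single simple homotopy type; (3) verify the relative/stability statement that $L_1 \vee^a \SS^n \cong_s L_2 \vee^b \SS^n$ rel $L$ for large $a,b$ — here one uses that $H_n$ of the universal covers are stably free $\Z G$-modules of rank determined by $\chi$, and that stably free becomes free after adding free summands, with the geometric realization of each algebraic move fixing $L$; (4) apply cancellation of the free $\Z G$-summands, valid because we are in the stable range guaranteed by the hypothesis involving $L_0$, to remove the $\SS^n$ wedge factors; (5) assemble the resulting rel-$L$ simple homotopy equivalence. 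The main obstacle is step (3)–(4): one must be careful that Dyer's cancellation theorem and the stabilization isomorphisms can be chosen compatibly with the fixed subcomplex $L$, i.e. that no step requires altering cells of $L$; this is where the hypotheses $L_1^{(n-1)}=L_2^{(n-1)}=L$ and ``at least two $n$-cells'' (ensuring there is room to perform handle slides and cancellations away from $L$) are used, and it is the one place where writing out the geometric realization of the algebraic moves takes real care.
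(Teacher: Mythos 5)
Your outline does not follow the paper's route, and at the two places where the real work happens it asserts rather than proves. First, the preliminary reduction is not well formed: the product $L_i\cdot L_0\in\mcK_L$ only makes sense if $L_0$ also has its $(n-1)$-skeleton identified with $L$, whereas in the hypothesis $L_0$ is an arbitrary $(G,n)$-complex whose sole role is to witness the Euler characteristic bound; and even if the product were defined, proving the conclusion for $L_1\cdot L_0$ and $L_2\cdot L_0$ does not yield it for $L_1$ and $L_2$ --- undoing that stabilization is itself a rel-$L$ cancellation, i.e.\ the very thing being proved. (No such reduction is needed anyway: the inequality $(-1)^n\chi(L_1)\ge 2+(-1)^n\chi(L_0)$ already places $L_1,L_2$ in the range where Dyer's absolute theorem applies.) Second, and more seriously, the heart of your argument --- that $L_1\vee^a\SS^n$ and $L_2\vee^b\SS^n$ are simple homotopy equivalent \emph{rel} $L$ and that the $\SS^n$ summands can then be cancelled \emph{rel} $L$ ``because we are in the stable range'' --- is exactly the content of the lemma and is not supplied by the results you cite. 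Dyer's theorems are absolute statements about $(G,n)$-complexes; cancellation of free $\Z G$-summands of $\pi_n$ gives at best an abstract isomorphism $\pi_n(L_1)\cong\pi_n(L_2)$, after which you still must (a) realize it by a map $L_1\to L_2$ restricting to the identity on $L$, and (b) arrange that the Whitehead torsion of that map vanishes. Your sketch provides no mechanism for either, and you flag this yourself as ``the main obstacle.'' You also lean on an ``at least two $n$-cells'' hypothesis that belongs to the later theorem, not to this lemma, whose only quantitative input is the $\chi$-inequality involving $L_0$.

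For comparison, the paper resolves precisely these two points with specific inputs: (a) Dyer's Theorem 3 gives an abstract homotopy equivalence, and then Johnson's Theorem 8.2 upgrades the induced chain homotopy equivalence to one that is the identity on $C_*(\widetilde{L})$, which is geometrically realizable rel $L$ because chain maps between $(G,n)$-complexes are realizable; (b) Dyer's Theorem 2 produces a self homotopy equivalence $g$ of $L_2$ (arranged, following Dyer's proof, to fix $L$) with $\tau(g)=\tau(f)$, so that composing with its inverse kills the torsion and yields a simple homotopy equivalence rel $L$. Any completed version of your stabilize-and-cancel strategy would need analogues of both of these steps; as written, the proposal has a genuine gap.
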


\begin{proof}
We start by establishing the result for homotopy equivalences.
By \cite[Theorem 3]{Dy81}, $L_1$ and $L_2$ are homotopy equivalent. In particular, $C_*(\widetilde{L_1})$ and $C_*(\widetilde{L_2})$ are chain homotopy equivalent. Since $C_{* \le n-1}(\widetilde{L_1})=C_{* \le n-1}(\widetilde{L_2}) = C_*(\widetilde{L})$, we can apply \cite[Theorem 8.2]{Johnson2} to get there exists a chain homotopy equivalence $F : C_*(\widetilde{L_1}) \to C_*(\widetilde{L_2})$ which restricts to the identity on $C_*(\widetilde{L})$.
For $(G,n)$-complexes, all maps on cellular chain complexes are geometrically realisable (see, for example, \cite{Dy76}) and so there exists a map $f : L_1 \to L_2$ which restricts to the identity on $L$ and is such that $C_*(f) = F$. Since $C_*(f)$ is a chain homotopy equivalence, $f$ is a homotopy equivalence, as required.

Next, \cite[Theorem 2]{Dy81} implies that there exists a self homotopy equivalence $g : L_2 \to L_2$ such that $\tau(g) = \tau(f) \in \text{Wh}(\pi_1(L_2))$, where $\tau$ denotes the Whitehead torsion and Wh denotes the Whitehead group. By the same argument as above, \cite[Proof of Theorem 2]{Dy81} implies that we can assume $g$ restricts to the identity on $L$. If $\bar{g}$ is the homotopy inverse of $f$, then $\bar{g} \circ f : L_1 \to L_2$ is a homotopy equivalence restricting to the identity on $L$ and which is such that $\tau(\bar{g} \circ f) = 0 \in \text{Wh}(\pi_1(L_2))$, i.e. $\bar{g} \circ f$ is a simple homotopy equivalence.
\end{proof}

\begin{proof}[Proof of Theorem \ref{thm: simple homotopy special case}]
Let $m \ge 2$ denote the number of $n$-cells of $L_1$. This is also the number of $n$-cells of $L_2$ since $\chi(L_1)=\chi(L_2)$ and $L_1^{(n-1)}=L_2^{(n-1)}$.
Let $i,j \in \{1,2\}$. By Lemma \ref{lem: product (G,n)-complexes}, 
$L_i \cdot L_j$ is a $(G,n)$-complex.
By construction, we have that 
\begin{align*} 
(-1)^n\chi(L_i \cdot L_j) &= (-1)^n\chi(L_i) + \#\{\text{$n$-cells of $L_j$}\} \\
&= (-1)^n\chi(L_1) + m \ge (-1)^n\chi(L_1) + 2.
\end{align*}
Hence, by Lemma \ref{lem: Dyer+}, $L_1 \cdot L_2$, $L_1 \cdot L_2$ and $L_2 \cdot L_2$ are each simple homotopy equivalent by maps which fix the common $(n-1)$-skeleton $L$. Since $n \ge 3$, 
\cite[Theorem 1]{Wa1} implies they are equivalent by $(n+1)$-deformations fixing $L$.  
\end{proof}

Combining Theorem \ref{thm: simple homotopy special case} with Lemma \ref{lem: special examples} completes the proof of Theorem \ref{thm: null vector not homotopic - higher dim}.

\begin{remark}
It would be interesting to see if a version of Theorem \ref{thm: simple homotopy special case} holds without the assumption that $G$ is a finite group. However, the key obstacle to obtaining such a generalization is that it is not currently clear whether \cite[Theorem 2]{Dy81} has an analogue over arbitrary finitely presented groups.
\end{remark}


\end{document}